\documentclass[12pt,oneside]{article}

\usepackage[margin=1.25in]{geometry}
\usepackage{setspace}
\usepackage[numbers,sort&compress]{natbib}

\usepackage{amsmath}
\usepackage{amsthm}
\usepackage{amsfonts}
\usepackage{amssymb}
\usepackage{mathtools}
\usepackage{bm}

\usepackage{graphicx}
\usepackage{subcaption}
\usepackage{float}
\usepackage{enumerate}
\usepackage{lmodern}
\usepackage{xcolor}
\usepackage{needspace}

\usepackage{tikz}
\usetikzlibrary{arrows.meta,positioning,matrix}

\usepackage[colorlinks=true,linkcolor=blue,citecolor=blue]{hyperref}

\numberwithin{equation}{section}

\newcommand{\R}{\mathbb{R}}
\newcommand{\E}{\mathbb{E}}

\newcommand{\calF}{\mathcal{F}}
\newcommand{\1}{\mathbf{1}}
\newcommand{\Prob}{\mathbb{P}}

\newcommand{\e}{\varepsilon}

\newcommand{\Cov}{\operatorname{Cov}}

\bmdefine\mub{\mu}
\bmdefine\etab{\eta}
\bmdefine\varthetab{\vartheta}
\bmdefine\betab{\beta}
\bmdefine\sigmab{\sigma}
\bmdefine\gammab{\gamma}
\bmdefine\Gammab{\Gamma}

 \definecolor{jan}{rgb}{0.0,0.3,0.8}

\theoremstyle{plain}
\newtheorem{thm}{Theorem}[section]
\newtheorem{lem}[thm]{Lemma}
\newtheorem{prop}[thm]{Proposition}
\newtheorem{cor}[thm]{Corollary}

\theoremstyle{definition}
\newtheorem{defn}{Definition}[section]

\theoremstyle{remark}
\newtheorem{rem}{Remark}[section]

\newcounter{boxnote}

\title{Forecasting and Manipulating the Forecasts of Others}

\author{
  Sam Babichenko\thanks{%
    Department of Statistics and Applied Probability, South Hall,
    University of California, Santa Barbara, CA 93106, USA.
    E-mail: \textit{sam@sbabichenko.com}.
    I am grateful to my advisor Tomoyuki Ichiba for guidance and
    support throughout this project. I thank Javier Birchenall for
    valuable discussions on the economic applications. Thiha Aung, Olivier Mulkin, Yan Lashchev, and Daniel Naylor provided encouragement and companionship
    throughout my doctoral studies.
    Replication code is available at
    \url{https://github.com/sbabichenko/Noise-State-Games}.}
}

\date{}

\begin{document}

\maketitle

\begin{abstract}
Finite-player dynamic games with dispersed private information are difficult
because actions both move payoffs and reshape what opponents learn, generating
hierarchies of beliefs about beliefs. This paper provides a recursive
representation for this problem. The noise state records agents' beliefs about
the underlying shocks that generate histories, so higher-order beliefs are
generated by composition rather than tracked as separate state variables. In the
canonical continuous-time LQG benchmark, the representation becomes explicit:
beliefs, value gradients, and policy rules are deterministic impulse-response
functions, and equilibrium is a deterministic fixed point in those functions.
Any fixed point in the noise-state linear class is a Nash equilibrium against
arbitrary admissible $L^2$ deviations. The first-order system contains an
information wedge, the shadow price of changing opponents' posteriors. In a
two-player benchmark, the wedge explains why pooling gains are mostly strategic,
why optimal precision allocation can starve an inefficient player of
information, and why signal precision changes policy rules themselves, so
separation fails.
\end{abstract}

\section{Introduction}

Dynamic games with dispersed private information pervade economics: firms infer competitors' pricing from market outcomes, traders learn from the order flow their own trades generate, central banks know their announcements reshape private agents' signals. In each setting, actions feed back into the information environment that agents optimize against, generating an infinite hierarchy of beliefs that Townsend~\cite{Townsend1983} identified and Sargent~\cite{Sargent1991} showed resists finite-dimensional analysis even in linear--Gaussian environments.

There is a gap between the way economic events are commonly understood and the class of dynamic information models that have been tractable. In ordinary economic reasoning, actions are read as signals: firms, banks, funds, workers, and policymakers interpret what others do, adjust, and thereby create new signals for others to interpret. Formal models often simplify this feedback by fixing the signal process, making individual actions negligible, using public histories, or truncating belief hierarchies. This paper closes the gap for linear--Gaussian environments: it keeps action--belief--signal feedback while replacing the hierarchy with a recursive state based on primitive shocks.

The key is to represent beliefs as beliefs about the underlying sources of randomness rather than as hierarchies of beliefs about the endogenous state. A researcher who specifies payoffs, a state equation, and a signal structure gets back equilibrium impulse-response functions; the belief hierarchy is handled internally.

We call this representation the noise state. The construction extends beyond the LQG setting studied here, though its explicit kernel form relies on conditional Gaussian filtering. More generally, it supplies a recursive state for dynamic games with dispersed information.

In a game of poker, the common approach describes each player's beliefs about opponents' hands, their beliefs about opponents' beliefs, and so on. The current approach instead characterizes each player's belief about the underlying source of randomness (here, the order the cards were shuffled) and how that belief depends on the truth; higher-order beliefs follow by composing these dependencies. The two representations are equivalent, but approximating a composition of deterministic dependencies is natural, whereas truncating a belief hierarchy is not.

A deviation affects the physical state, as in a perfect-information game, but it also changes what opponents infer about the primitive shocks. Those revised noise states feed into opponents' actions and then back into the physical system. Thus the relevant Markov state consists of the physical state, the players' noise states, and, when relevant, the filtering maps that translate histories into beliefs.

The LQG benchmark is the clean case in which this representation becomes explicitly solvable. Beliefs, value gradients, and policy rules reduce to deterministic impulse-response functions, and equilibrium is a fixed point in those functions. This benchmark is the base case for a broader noise-state program in conditional-Gaussian linear economies, including strategic trading, CARA risk aversion, and symmetric sparse networks in which local feedback modes coexist with aggregate mean-field modes.

The characterization produces a shadow price of manipulation, the information wedge $\mathcal{V}^i_t$. In the two-player benchmark, the wedge explains why most gains from pooling information are strategic rather than purely statistical, why optimal precision allocation can generate information starvation by concentrating signals on the efficient mover of the state, and why separation fails: changing signal precision changes equilibrium policy rules, not only posterior variances. As a diagnostic, Corollary~\ref{cor:exogenous_signal} shows that the wedge disappears when opponents' signals are exogenous to individual controls, recovering the standard fixed-information LQG optimality system. Section~\ref{sec:two_player_tracking_numerics} develops these economic consequences in a two-player game, and Section~\ref{sec:general_theory} develops the general theory.

\subsection{Related Literature}

This paper contributes to the rational-expectations literature on higher-order beliefs and dispersed information. Townsend's ``forecasting the forecasts of others'' problem and Sargent's signal-extraction examples show that private information can destroy finite-dimensional recursion even in linear environments \citep{Townsend1983,Sargent1991}. Existing approaches recover tractability by making signals exogenous to the best-response problem, making individual actions negligible, using public histories, or truncating the hierarchy. The present paper keeps the finite-player channel through which actions change what opponents learn, and represents it through beliefs about primitive shocks.

The paper also relates to the social value of information and imperfect common knowledge. In Morris--Shin, Angeletos--Pavan, Woodford, and related work, signal structures affect behavior but are fixed when agents optimize \citep{MorrisShin2002,AngelotosPavan2007,Woodford2003,BergemannHeumannMorris2015}. Here, information changes equilibrium policy kernels themselves, because the wedge prices shifts in another player's posterior; pooling gains are therefore largely strategic rather than purely statistical.

Frequency-domain and finite-state methods exploit signal laws fixed in the best-response problem \citep{SargentHansen1981,Kasa2000,RondinaWalker2021,HuoTakayamaWorking}. Here, a deviation changes the state process opponents observe, so filters are part of the equilibrium response. In the LQG benchmark, the noise state turns this joint filtering/control problem into a deterministic fixed point in impulse-response maps. Finite-order truncations approximate the hierarchy directly; noise states generate higher-order beliefs by composing primitive-shock projections \citep{Nimark2017}.

Technically, the paper connects to decentralized stochastic control, LQG team theory, and common-information methods. Classical team results obtain linearity under fixed information structures, while common-information methods condition on a shared information base \citep{Radner1962,MarschakRadner1972,NayyarMahajanTeneketzis}. The noise-state approach instead isolates the bilateral channel by which one player's action shifts another player's posterior, action, and subsequent signal; the wedge prices this channel directly.

Mean-field LQG games obtain tractability by making individual actions informationally negligible \citep{HuangMalhameCaines,CarmonaDelarue2018}. This is appropriate for aggregate modes, but it removes local informational channels; the wedge is the finite-player object that disappears. Symmetric sparse networks provide a different reduction: graph symmetries re-index the same noise-state representation into local feedback modes and aggregate mean-field interaction modes.

The precision-allocation exercise relates to information design and rational inattention \citep{KamenicaGentzkow,BergemannMorrisBCE,Sims2003}, but it does not solve for optimal attention or disclosure. It provides the equilibrium map from a fixed signal technology to behavior and welfare when precision changes policy rules through the wedge.

\section{Bilateral belief manipulation in a two-player game}
\label{sec:two_player_tracking_numerics}

This section uses a two-player tracking game as a microscope for the local feedback channel: one player's action changes the other's posterior, the response feeds back into the state, and the first player must price it. The example previews the objects developed in Sections~\ref{sec:model}--\ref{sec:general_theory} and shows why the wedge matters for welfare, pooling, precision allocation, and separation.

\subsection{Primitives and objectives}
\label{sec:ex_primitives}

Fix a horizon $[0,T]$. The state $X_t\in\mathbb{R}$ satisfies
\begin{equation}
\label{eq:ex_state_num}
dX_t = \big(D_t^1 + D_t^2\big)\,dt + \sigma\,dW_t^0,\qquad X_0=x_0,
\end{equation}
where $W=(W^0,W^1,W^2)$ is a standard three-dimensional Brownian motion and $\sigma>0$ is the fundamental volatility.
Player $i\in\{1,2\}$ does not observe $X$ directly; instead they
observe the private diffusion channel
\begin{equation}
\label{eq:ex_obs_num}
dY_t^1 = \sqrt{p_1}\,X_t\,dt + dW_t^1,\qquad
dY_t^2 = \sqrt{p_2}\,X_t\,dt + dW_t^2.
\end{equation}
The numbers $p_i$ are signal precisions.
Player~$i$ chooses $D^i$ adapted to their private observation
history $\mathcal{F}_t^i=\sigma(Y_s^i:s\le t)$.

Running costs are symmetric tracking losses with quadratic effort
penalty $r>0$:
\begin{equation}
\label{eq:ex_costs_num}
J^1(D^1;D^2)=\mathbb{E}\!\int_0^T\Big((X_t-1)^2 + r\,(D_t^1)^2\Big)\,dt,
\quad
J^2(D^2;D^1)=\mathbb{E}\!\int_0^T\Big((X_t+1)^2 + r\,(D_t^2)^2\Big)\,dt.
\end{equation}
Player~1 wants the state near~$+1$; player~2 wants it near~$-1$.
There is no terminal cost in this worked example.
\begin{figure}[t]
\centering
\begin{tikzpicture}[>=Stealth, thick, scale=0.85, transform shape,
  box/.style={draw, rounded corners, align=center, inner sep=5pt}]

\node[box] (W) at (0,0) {Primitive shocks\\ $W^0$ (fundamental), $W^1,W^2$ (signal noise)};

\node[box] (Y1) at (-5.5,-1.8) {Private signal $Y_t^1$\\ (precision $p_1$)};
\node[box] (X)  at (0,-1.8) {State / fundamental\\ $X_t$};
\node[box] (Y2) at (5.5,-1.8) {Private signal $Y_t^2$\\ (precision $p_2$)};

\node[box] (F1) at (-5.5,-3.6) {Filter / belief\\ $\widehat W_t^1(\cdot)$};
\node[box] (D)  at (0,-3.6) {Actions\\ $D_t^i=\bar D^i_t+\int_0^t D^i_t(u)\,d_u\widehat W_t^i(u)$};
\node[box] (F2) at (5.5,-3.6) {Filter / belief\\ $\widehat W_t^2(\cdot)$};

\draw[->] (W) -- (X);
\draw[->] (W) -- (Y1);
\draw[->] (W) -- (Y2);
\draw[->] (X) -- (Y1);
\draw[->] (X) -- (Y2);
\draw[->] (Y1) -- (F1);
\draw[->] (Y2) -- (F2);
\draw[->] (F1) -- (D);
\draw[->] (F2) -- (D);
\draw[->, bend right=30] (D.north) to (X.south);

\end{tikzpicture}
\caption{Information-to-action feedback in the worked example.}
\label{fig:ex_schematic}
\end{figure}

\subsection{Impulse responses and wedges}
\label{sec:ex_preview}

Equilibrium controls can be written in two equivalent impulse-response coordinates:
\begin{equation}
\label{eq:ex_control_representations}
\begin{aligned}
D_t^i
&= \bar D^i_t + \int_0^t D^i_t(u)\,d_u\widehat W_t^i(u),
\qquad
\widehat W_t^i(u):=\E[W_u\mid\calF_t^i],\\
&= \bar D^i_t + \int_0^t \mathcal D^i_t(u)\,dW_u .
\end{aligned}
\end{equation}
The first line is the strategy written in player~$i$'s noise-state coordinates.  The second line is the realized response to primitive shocks.  The deterministic functions $D^i$ and $\mathcal D^i$ are the objects solved by the fixed point, and fixed points are Nash equilibria against arbitrary admissible $L^2$ deviations.

\subsection{Computation}
\label{sec:ex_computation}

Equilibrium is computed by Picard iteration on the deterministic impulse-response system, discretized on a uniform grid of $N=40$ points; no Monte Carlo simulation is used.  Unless otherwise stated, $T=1$, $x_0=0$, $r=0.1$, $\sigma=1$, and there is no terminal cost.  The impulse-response benchmark in Figure~\ref{fig:impulse_w0_first} uses asymmetric precisions $(p_1,p_2)=(3,10)$.  The benchmark residual reaches $10^{-5}$ in $19$ iterations.\footnote{An interactive browser visualization is available at \url{https://sbabichenko.com/lqg}.}

Figure~\ref{fig:impulse_w0_first} displays the primitive object solved by the algorithm: a system-noise shock affects the state, players' posterior estimates catch up through private signals, and primitive-control responses feed the shock back into the state.

\begin{figure}[!htbp]
  \centering
  \includegraphics[width=\linewidth]{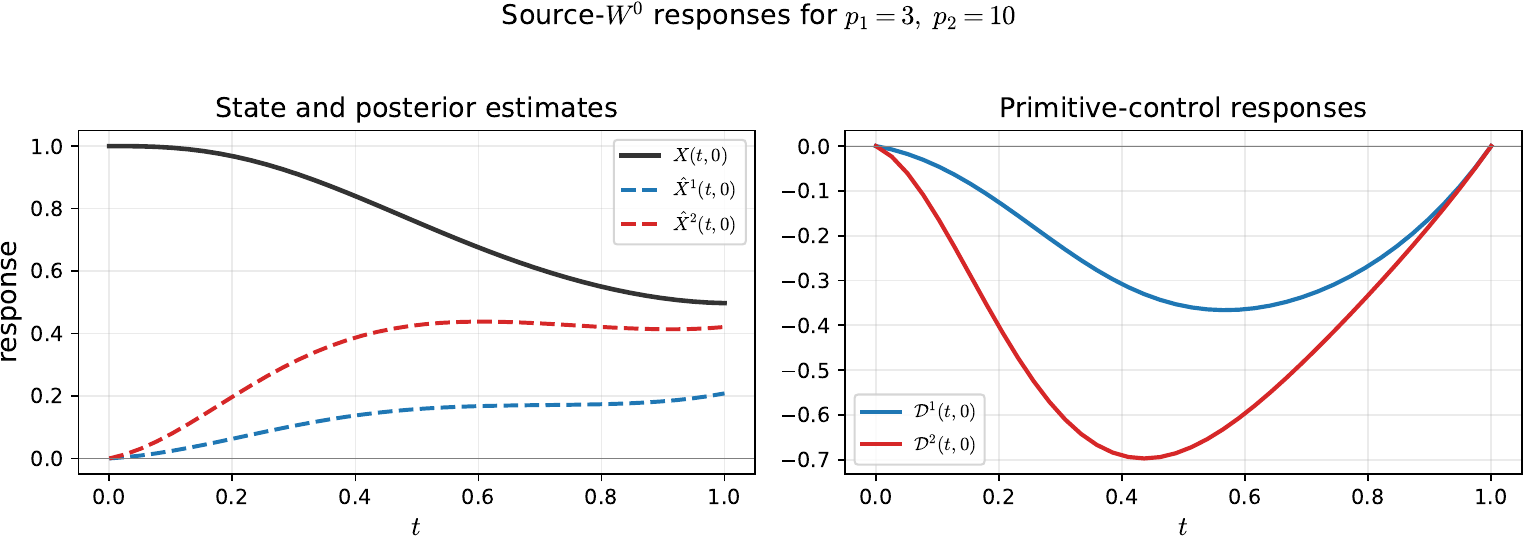}
  \caption{Impulse responses to the first system-noise shock $dW^0_0$ with $p_1=3$, $p_2=10$, and $r_1=r_2=0.1$.  Left: state and posterior-estimate responses.  Right: primitive-control responses $\mathcal D^1(t,0)$ and $\mathcal D^2(t,0)$.}
  \label{fig:impulse_w0_first}
\end{figure}

\subsection{Mean actions, separation failure, and the welfare
  cost of belief manipulation}
\label{sec:mean_overshoot}

Since the stochastic integral in the first line of~\eqref{eq:ex_control_representations} has zero
unconditional mean, $\mathbb E[D_t^i]=\bar D^i_t$ isolates the
deterministic tug-of-war over the target~$\pm 1$, while the response
term captures impulse responses to posterior shocks.

\paragraph{Separation failure.}
Under perfect information, the mean policy is independent of signal precision.
Decentralized information breaks this: precision changes the speed of opponents' updating, creating an incentive to act on posteriors rather than on the state.
The mechanism is the information wedge: changing signal precision changes the unresolved component, which changes the belief adjoint and hence the policy rule independently of the state estimate (Remark~\ref{rem:separation_failure}).

\begin{figure}[!htbp]
  \centering
  \includegraphics[width=0.66\linewidth]{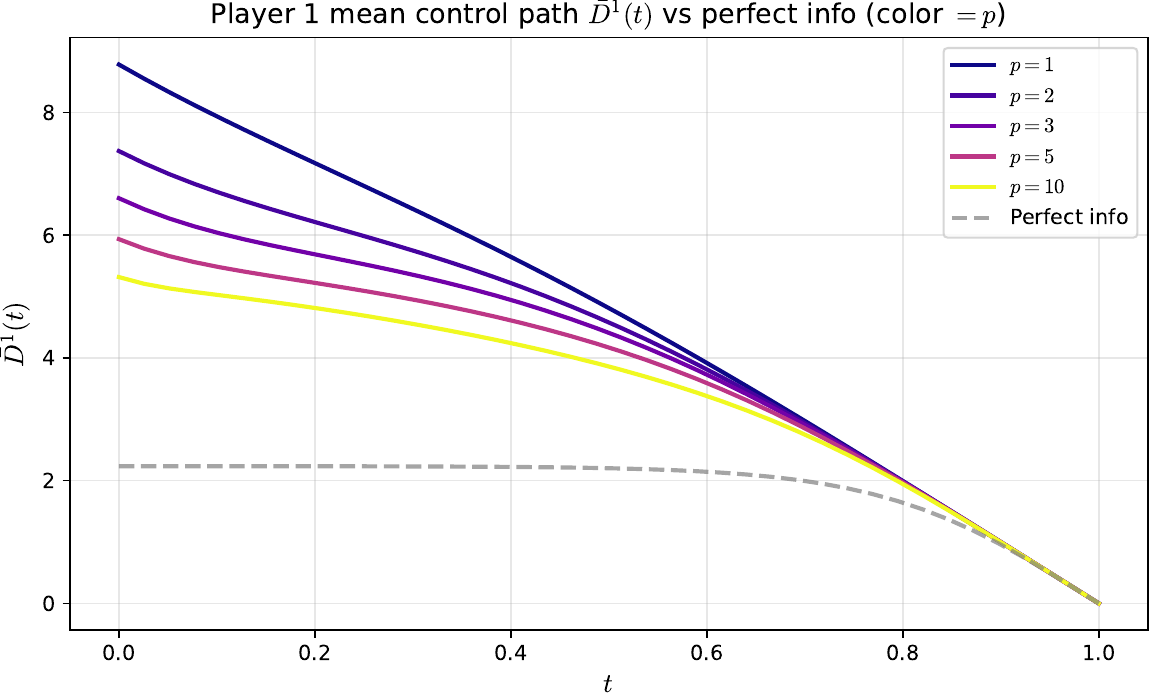}
  \caption{Equilibrium mean control $\bar D^1_t$ as signal precision $p$ varies, with the perfect-information benchmark dashed.}
  \label{fig:mean_response_precision}
\end{figure}

\paragraph{Information pooling as a Pigouvian intervention.}
Figure~\ref{fig:costs_private_vs_pooled} compares private signals with a pooled signal of precision $p_1+p_2$ under opposing targets $(\pm1)$ and a common target $(0)$.  Pooling is Pareto-improving in both cases, but the gain is an order of magnitude larger under competition.  Since the target change affects only the deterministic mean system, the cooperative case isolates the pure estimation benefit.  The excess gain under competition is the welfare cost of bilateral belief manipulation.

\begin{figure}[!htbp]
  \centering
  \includegraphics[width=\linewidth]{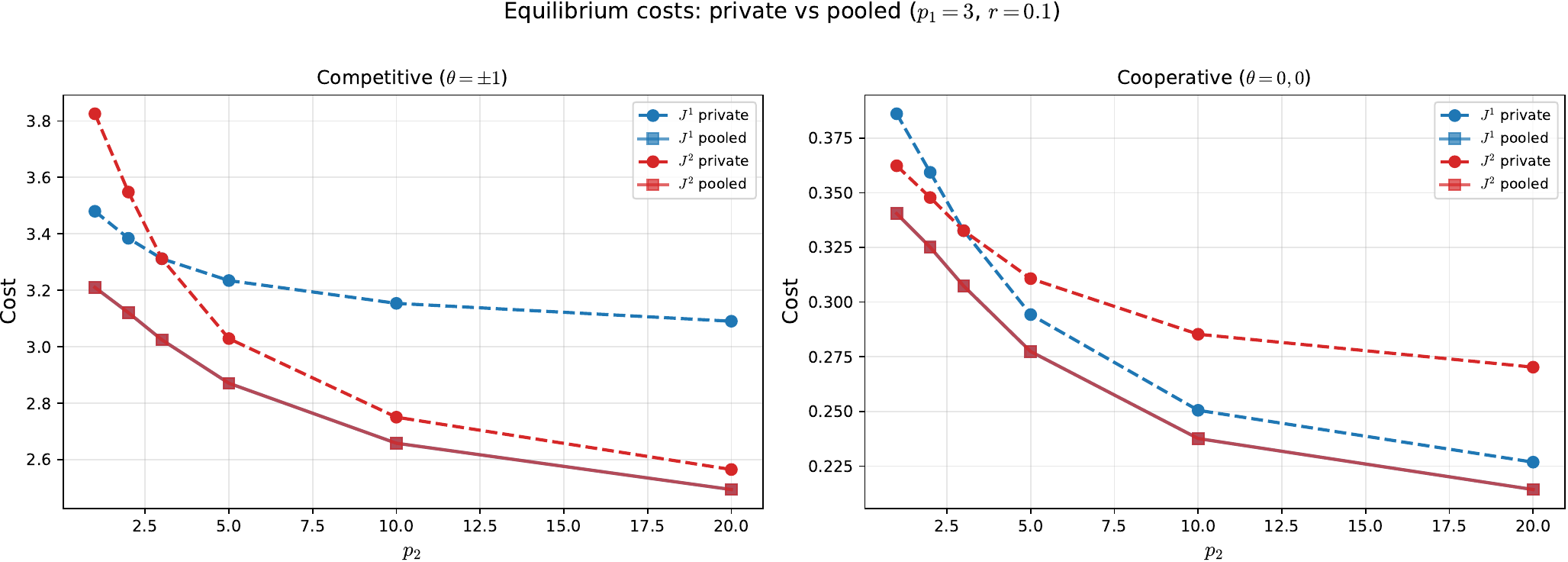}
  \caption{Equilibrium costs under private signals (dashed) and pooled signals (solid), with $p_1=3$ fixed and $p_2$ varying.  Left: opposing targets $(\pm1)$.  Right: common target $(0)$.}
  \label{fig:costs_private_vs_pooled}
\end{figure}

\subsection{Optimal precision allocation}
\label{sec:precision_allocation}

Information pooling is blunt. Given a fixed precision budget, how should a planner distribute it between the two players?

\Needspace{8\baselineskip}
\paragraph{Setup.}
Retain the state dynamics~\eqref{eq:ex_state_num} and observation
channels~\eqref{eq:ex_obs_num} with opposing targets
($\pm 1$), but allow asymmetric precisions
$(p_1,p_2)$ subject to $p_1+p_2=\bar P$ and asymmetric
effort costs $(r_1,r_2)$ with $r_1<r_2$, so that player~1 is
the more efficient mover of the state.
A planner chooses the split to minimize aggregate equilibrium
cost; for each candidate allocation we solve the full
equilibrium via Picard iteration.
Parameters: $\bar P=20$, $T=1$, with the fundamental volatility in~\eqref{eq:ex_state_num} reduced to $\sigma=0.5$.

\begin{figure}[H]
  \centering
  \includegraphics[width=\linewidth]{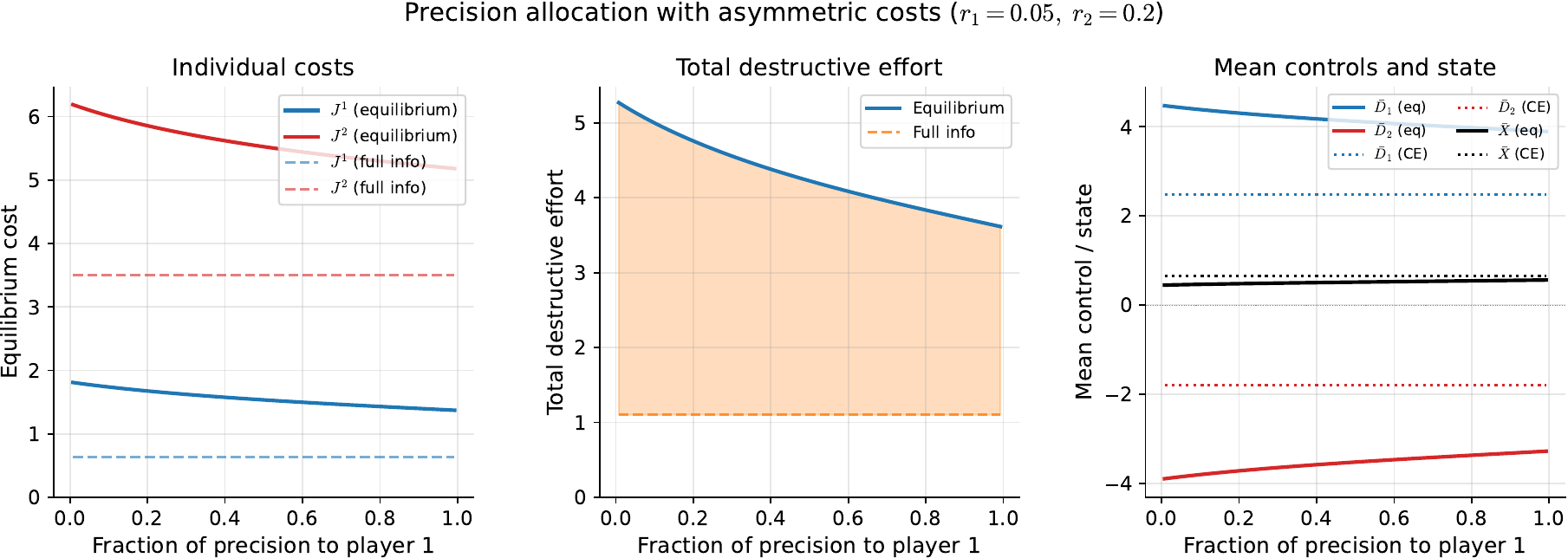}
  \caption{Equilibrium quantities across the precision split, $p_1+p_2=\bar P=20$, with opposing targets, $\sigma=0.5$, and asymmetric effort costs $(r_1,r_2)=(0.05,0.2)$.  Left: individual costs.  Center: total destructive effort.  Right: mean controls and mean state.}
  \label{fig:precision_decomposition}
\end{figure}

\paragraph{Information starvation.}
Under cooperation, precision should be assigned to the efficient mover.  Under competition, one might instead expect a planner to balance precision to limit manipulation.  Figure~\ref{fig:precision_decomposition} shows the opposite.  With asymmetric effort costs $(r_1,r_2)=(0.05,0.2)$, concentrating precision on the efficient player shifts the mean state toward that player's target, but it also sharply reduces destructive effort.  The latter effect dominates: both players' costs fall, and aggregate effort moves toward the full-information benchmark.

The mechanism is information starvation.  Denying the inefficient player signal access weakens the bilateral arms race, a commitment logic in the sense of \citet{Schelling1960}. Giving all precision to the inefficient player produces the worst outcome, because the cheap mover pushes hard against a well-informed opponent. The planner's optimum is therefore not balanced precision but an information monopoly for the efficient player.  The right panel also displays separation failure: certainty-equivalent controls are invariant to the split, whereas equilibrium controls move with the belief adjoints.

\section{The Decentralized LQG Game}
\label{sec:model}
\paragraph{Informal description of the game.}
The games we study have a common evolving state, private noisy observation channels, and controls that feed back into the state and hence into others' signals. These features generate the Townsend hierarchy; quadratic objectives and linear--Gaussian primitives make the problem tractable.

\subsection{State dynamics and objectives}
\label{sec:state_objectives}

Consider $n$ players interacting over a finite time horizon $[0,T]$ on a 
filtered probability space $(\Omega,\calF,\mathbb{F},\mathbb{P})$.

\paragraph{State dynamics.}
The state $X_t\in\R^d$ evolves as
\begin{equation}
\label{eq:state_dynamics}
dX_t=\bigl(A(t)X_t+\sum_{i=1}^n D_t^i\bigr)dt+\Sigma(t)\,dW_t^0,
\qquad X_0=x_0,
\end{equation}
where $A(t)\in\R^{d\times d}$ is deterministic and bounded,
$W^0$ is a $d$-dimensional Brownian motion,
$\Sigma(t)\in\R^{d\times d}$ is deterministic and bounded,
and $D_t^i\in\R^d$ is player $i$'s control.

\paragraph{Objectives.}
Let $G^{X,i}_t$ denote the linear state-cost coefficient. In the baseline case it is deterministic and bounded.\footnote{The formulas below also allow primitive-noise linear coefficients $G^{X,i}_t=\bar G^{X,i}_t+\int_0^t G^{X,i}_t(r)\,dW_r$ with deterministic mean and kernel; deterministic coefficients set $G^{X,i}_t(r)\equiv0$. The examples use deterministic coefficients.}
Each player $i$ incurs the cost
\begin{equation}
\begin{split}
\label{eq:random_cost}
\mathcal{C}^i(D^i; D^{-i}) :=& \int_0^T \left(X_t^\top G^{XX,i}(t) X_t 
+ 2(G^{X,i}_t)^\top X_t+G^i(t) +  (D^i_t)^\top G^{DD,i}(t) D^i_t\right) dt\\
&+ X_T^\top G^{XX,i}(T) X_T + 2(G^{X,i}_T)^\top X_T,
\end{split}
\end{equation}
where $G^{XX,i}(t)$ and $G^{DD,i}(t)$ are symmetric positive 
semidefinite matrices (with $G^{DD,i}(t)$ positive definite), and $D^{-i} := (D^j)_{j \neq i}$ denotes 
opponents' controls.
Each player $i$ seeks to minimize
\begin{equation}
\label{eq:objective}
J^i(D^i; D^{-i}) := \mathbb{E}\bigl[\mathcal{C}^i(D^i; D^{-i})\bigr].
\end{equation}
Opponents' controls enter player~$i$'s cost only through~$X_t$, which they move
via~\eqref{eq:state_dynamics} and which player~$i$ observes through~\eqref{eq:observations}.


\subsection{Information structure}
\label{sec:info_structure}

\paragraph{Observations.}
Player~$i$ does not observe $X_t$ directly.  Instead, their private information is summarized by the diffusion signal

\begin{equation}
\label{eq:observations}
dY^i_t = \Gamma^i(t)\, X_t \, dt + E^i\,dW_t,
\quad Y^i_0 = 0,
\end{equation}
where $\Gamma^i(t)\in\R^{d\times d}$ is a deterministic
observation gain and $E^i\in\R^{d\times(n+1)d}$ is the block
selector extracting player~$i$'s noise channel
($W^i_t:=E^iW_t$).
All Brownian motions $\{W^0,W^1,\ldots,W^n\}$ are mutually
independent.
The precision matrix $P^i(t):=\Gamma^i(t)^\top\Gamma^i(t)$
measures the Fisher information rate about the state per unit
time; the gain $\Gamma^i$ is the primitive, and $P^i$ is derived.
The block-selector form is without loss: replacing $E^i$ by a general invertible noise loading $\Sigma^i(t)$ amounts to substituting $\Gamma^{i,\top}(\Sigma^i\Sigma^{i,\top})^{-1}$ for $\Gamma^{i,\top}$ throughout.

\paragraph{Notation for primitive shocks.}
We collect all primitive noises into a single vector
$W_t:=(W^0_t,\dots,W^n_t)^\top\in\R^{(n+1)d}$.
The block projector $\Pi^i:=E^{i,\top}E^i$ is the identity on
the $i$-th block and zero elsewhere:
$\Pi^i v=(0,\dots,0,v^i,0,\dots,0)$.
The block-selector structure ensures that each player's
observation noise is an independent component of~$W$, giving the
noise-state its direct/indirect decomposition
(Theorem~\ref{thm:noise_state_filter_compact}): player~$i$
(almost) directly observes $\Pi^i W$ and infers $(I-\Pi^i)W$ from
drift-based learning.

Player $i$'s information at time $t$ is their observation history,
$\calF^i_t := \sigma(Y^i_s : s \leq t)$.
An admissible control is $\calF^i_t$-adapted with
$\mathbb{E}[\int_0^T \|D^i_t\|^2\, dt] < \infty$,
where $\|\cdot\|$ denotes the Euclidean norm on~$\R^d$.

\paragraph{Unresolved impulse response.}
For each player $i$, define
\[
\widetilde X^i_t(u) := X_t(u) - \widehat X^i_t(u),
\]
the component of the state's impulse response to the $u$-th shock that player $i$
has not yet resolved at time $t$: large when the shock is recent or the
observation channel weak, and shrinking as observations accumulate.
Formally, $\widetilde X^i_t(u)=\Cov(X_t,\,dW_u\mid\calF_t^i)/du$;
it serves as the filtering gain in the noise-state update
(Theorem~\ref{thm:noise_state_filter_compact}), with its explicit
formula in~\eqref{eq:Xtilde_from_F}.

Throughout the main characterization, the gain $\Gamma^i(\cdot)$ is fixed; the two-player benchmark studies comparative statics and planner-chosen precision within this fixed observation model.

\subsection{Nash Equilibrium}

Because each player's drift depends on opponents' estimates $\widehat X_t^k:=\E[X_t\mid\calF_t^k]$,
optimal actions generate an infinite hierarchy of conditional expectations.  The following equilibrium concept fixes opponents' strategy maps rather than their
realized actions, which is essential for the closure results in Section~\ref{sec:general_theory}.

\begin{defn}[Nash equilibrium in private-signal strategies]
\label{def:nash_strategy}
Throughout, a player's choice is a strategy map:
for each $t\in[0,T]$, the action $D_t^i$ is a (progressively measurable) functional of the
private observation history $Y^i_{\cdot\wedge t}$.
We write $D_t^i=D_t^i[Y^i]$, where the bracket notation $D_t^i[\cdot]$ denotes the map
from the observation path $(Y^i_s)_{s\le t}$ to the action at time~$t$.
We require $\E\int_0^T\|D_t^i\|^2\,dt<\infty$.

A profile $D^*=(D^{*,1},\ldots,D^{*,n})$ is a Nash equilibrium if for every player $i$ and every
admissible alternative strategy map $\widetilde D^i[\cdot]$,
\[
J^i(D^{*,i};D^{*,-i}) \le J^i(\widetilde D^i;D^{*,-i}),
\]
where the inequality compares the costs induced by the corresponding strategy profiles.
Equivalently: in a unilateral deviation by $i$, opponents' maps are held fixed and are
applied to the deviated signal paths.
\end{defn}

Holding opponents' strategy maps fixed is substantive because signals are endogenous: a unilateral deviation changes the drifts of opponents' observation paths, and hence the likelihoods they assign to histories.  Under admissible square-integrable controls this only changes drifts, so the induced observation laws are mutually absolutely continuous on every finite horizon.  A perfect-Bayesian formulation is therefore simpler than in discrete signal games: Bayes' rule applies on a common support, and the fixed strategy maps are evaluated on whatever observation paths occur.

\paragraph{Noise-state.}
On a given profile, the \emph{noise-state} is the conditional mean path of the aggregated primitive noise:
\begin{equation}
\label{eq:noise_state_def}
\widehat W_t^i(u):=\E[W_u\mid\calF_t^i],\qquad 0\le u\le t.
\end{equation}
Off equilibrium the same symbol denotes the fixed linear-Gaussian filter associated with the candidate profile, applied to the realized history $Y^i_{[0,t]}$.  On the profile it equals the conditional expectation in~\eqref{eq:noise_state_def}.  In a unilateral deviation, the filter is not recomputed.  At equilibrium, $\widehat W^i$ is a deterministic functional of player~$i$'s innovation path; since player~$i$'s own action enters $Y^i$ only through a known drift, that innovation path is invariant to $i$'s deviations.

\begin{defn}[(Primitive-noise) Linear Process]\label{def:linear_process}
An $L^2$ stochastic process $L=(L_t)_{t\in[0,T]}$ is a (primitive-noise) Linear process if there exist deterministic functions
\[
\bar L:[0,T]\to\R^m,
\qquad
L:[0,T]^2\to\R^{m\times (n+1)d},
\]
such that, for every $t\in[0,T]$,
\[
L_t \;=\; \bar L(t) + \int_0^t L(t,s)\, dW_s,
\qquad
\int_0^t \|L(t,s)\|^2\,ds < \infty .
\]
When convenient, we extend $L(t,s)$ by $0$ for $s>t$.
\end{defn}

\begin{defn}[Noise-state Linear control / strategy]\label{def:noise_state_linear_control}
An admissible strategy map is noise-state linear if, for the fixed filter described above, there exist deterministic functions
\[
\bar D^i:[0,T]\to\R^{d},
\qquad
D^i:[0,T]^2\to\R^{d\times (n+1)d},
\]
such that, for every private history $Y^i$ and every $t\in[0,T]$,
\[
D_t^i[Y^i]
=
\bar D^i_t
+
\int_0^t D^i_t(u)\,d_u\widehat W_t^i[Y^i](u),
\qquad
\int_0^t \|D^i_t(u)\|^2\,du<\infty.
\]
We extend $D^i_t(u)$ by $0$ for $u>t$ and omit the argument $[Y^i]$ on the equilibrium path.
\end{defn}


\section{General Theory}
\label{sec:general_theory}

This section derives the objects previewed in Section~\ref{sec:two_player_tracking_numerics} for the general $n$-player model.  The primitive-shock space plays the role of a dynamic Harsanyi type space \citep{Harsanyi1967}: conditional beliefs are projections of $W=(W^0,\ldots,W^n)$ onto private signal histories, and higher-order beliefs are obtained by composing deterministic response maps.  Because actions reshape the signals from which beliefs are formed, those maps must be solved as a fixed point.

\begin{figure}[H]
\centering
\begin{tikzpicture}[
  font=\small,
  >=Latex,
  box/.style={
    draw=black!55, line width=0.6pt,
    rounded corners=3pt,
    fill=black!2,
    align=center,
    text width=2.55cm,
    minimum height=0.8cm,
    inner sep=4pt
  },
  key/.style={box, draw=jan!70!black, fill=jan!8},
  arr/.style={->, line width=0.8pt, draw=black!70},
  soft/.style={->, line width=0.55pt, draw=black!45, dashed}
]

\node[box] (A) at (0,0) {Candidate\\opponents' maps};
\node[key] (B) at (3.2,0) {Forward closure\\state and filters};
\node[box] (C) at (6.4,0) {Deviation\\fixed maps};
\node[key] (D) at (9.6,0) {Best response\\noise-state linear};
\node[key, text width=2.8cm] (E) at (4.8,-2.05) {Fixed point\\Nash equilibrium};

\draw[arr] (A) -- (B);
\draw[arr] (B) -- (C);
\draw[arr] (C) -- (D);
\draw[arr] (D.north) -- ++(0,0.45) -| (A.north);
\draw[soft] (B.south) -- (E.north west);
\draw[soft] (D.south) -- (E.north east);

\end{tikzpicture}
\caption{Noise-state fixed-point loop. Opponents' strategy maps determine state and filter response functions; deviations propagate through these fixed maps; the FOC gives a noise-state linear best response.}
\label{fig:nsl_fixed_point_loop}
\end{figure}

\paragraph{Filtering closure.}
Recall the noise-state $\widehat W_t^i(\cdot)$~\eqref{eq:noise_state_def}. We record the deterministic-kernel representation of estimated-noise increments on the causal triangle $\Delta_T=\{(t,s):0\le s\le t\le T\}$, extended by zero off $\Delta_T$. Write $d_u$ for increments in the path index and $d_t$ for updates in time. The state has form $X_t=\bar X_t+\int_0^t X_t(s)\,dW_s$, with $X_t(s)$ the impulse-response kernel. Appendix~\ref{app:filtering} gives the general linear-filtering result; here $C(t,s)=\Gamma^i(t)X_t(s)$.

\begin{thm}[Linear Filtering Closure]
\label{thm:noise_state_filter_compact}
Fix a player $i$.
Assume the state admits the primitive-noise linear form
\[
X_t=\bar X_t+\int_0^t X_t(s)\,dW_s,
\]
and player $i$ observes
\[
dY_t^i=\Gamma^i(t)\,X_t\,dt+E^i\,dW_t,
\]
with deterministic $P^i(t)\succeq 0$ and block selector $E^i$.

Let $\Pi^i:=E^{i,\top}E^i$ and $\widehat X_t^i:=\E[X_t\mid\calF_t^i]$, and define the innovation
$dI_t^i:=dY_t^i-\Gamma^i(t)\,\widehat X_t^i\,dt$.

\medskip
\noindent
(i) The estimated-noise path increments admit the decomposition
\begin{equation}\label{eq:F_rep_du_What}
d_u\widehat W_t^i(u)
=
\underbrace{\Pi^i\,dW_u}_{\text{directly observed}}
+
\underbrace{\Big(\int_0^t F_t^i(u,s)\,dW_s\Big)\,du}_{\text{indirect inference}},
\qquad u<t,
\end{equation}
where the deterministic kernel $F^i$ is given explicitly by
\begin{equation}\label{eq:F_explicit}
F_t^i(u,s)
=
\widetilde X^i_s(u)^\top\Gamma^i(s)^\top E^i
\;+\;
E^{i,\top}\Gamma^i(u)\,\widetilde X^i_u(s)
\;+\;
\int_{\max(u,s)}^t \widetilde X^i_r(u)^\top P^i(r)\,\widetilde X^i_r(s)\,dr.
\end{equation}

\medskip
\noindent
(ii) The mixed $(t,u)$-update satisfies
\[
d_t\bigl(d_u\widehat W_t^i(u)\bigr)
=
\widetilde X^i_t(u)^\top\Gamma^i(t)^\top\,dI_t^i\,du.
\]

\medskip
\noindent
The unresolved kernel $\widetilde X^i$ is determined algebraically
from $F^i$ by~\eqref{eq:Xtilde_from_F}; the filtering kernel $F^i$
is the unique solution of the forward evolution
system~\eqref{eq:F_evolution}--\eqref{eq:F_boundary}
(Definition~\ref{def:filtering_kernel_system}).
\end{thm}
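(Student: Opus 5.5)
Since everything is jointly Gaussian, I would prove the theorem with Gaussian projection theory on the primitive-shock space. Because $X$ is primitive-noise linear and $dY^i_t=\Gamma^i(t)X_t\,dt+E^i\,dW_t$, the pair $(W,Y^i)$ is jointly Gaussian. Hence $\calF^i_t$ is generated by the innovation $I^i$, which is a standard Brownian motion in $\R^d$ adapted to $\calF^i_t$ by the Fujisaki--Kallianpur--Kunita theorem. This gives the martingale representation
\[
\widehat W^i_t(u)=\E[W_u\mid\calF^i_t]=\int_0^t K_t(u,r)\,dI^i_r,\qquad K_t(u,r)=\frac{d}{dr}\Cov\bigl(W_u,I^i_r\bigr).
\]
Computing $\Cov(W_u,dI^i_r)$ directly, it equals $\1_{r\le u}E^{i,\top}dr$ from the noise term, plus $\Cov(W_u,\Gamma^i(r)(X_r-\widehat X^i_r))dr$ from the drift term. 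Since $\widetilde X^i_r(u)\,du=\Cov(X_r,dW_u\mid\calF^i_r)$ and conditional covariances are deterministic in the Gaussian case, the second term is $\int_0^u\widetilde X^i_r(v)^\top dv\,\Gamma^i(r)^\top$. Differentiating in $u$ yields part (ii) at once:
\[
d_t\bigl(d_u\widehat W^i_t(u)\bigr)=\widetilde X^i_t(u)^\top\Gamma^i(t)^\top dI^i_t\,du \quad (u<t),
\]
with the atom $E^{i,\top}dI^i_u$ sitting on the diagonal $r=u$.

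For part (i), I would integrate (ii) in $t$ from $u$ to $t$. I would add the diagonal contribution $E^{i,\top}dI^i_u=\Pi^i dW_u+E^{i,\top}\Gamma^i(u)\widetilde X_u\,du$, where $\widetilde X_u:=X_u-\widehat X^i_u$ and $E^{i,\top}E^i=\Pi^i$, obtained by substituting $dI^i_u=\Gamma^i(u)\widetilde X_u\,du+E^i dW_u$. This gives
\[
d_u\widehat W^i_t(u)=\Pi^i dW_u+\Bigl(E^{i,\top}\Gamma^i(u)\widetilde X_u+\int_u^t\widetilde X^i_r(u)^\top\Gamma^i(r)^\top dI^i_r\Bigr)du.
\]
The remaining step is to write $\widetilde X_r=\int_0^r\widetilde X^i_r(s)\,dW_s$ as a Wiener integral. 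This holds because the error process is independent of $\calF^i_r$ and Gaussian, and its kernel with respect to $dW$ is exactly the conditional covariance kernel $\widetilde X^i_r(s)$, which I would check by computing $\Cov(\widetilde X_r,W_s)$. Substituting this together with $dI^i_r=\Gamma^i(r)\widetilde X_r\,dr+E^i dW_r$ and applying stochastic Fubini to exchange $dr$ and $dW_s$ should produce the three terms of \eqref{eq:F_explicit}:
\begin{itemize}
\item the $E^i\,dW_r$ part of the $r$-integral gives $\widetilde X^i_s(u)^\top\Gamma^i(s)^\top E^i$ for $s>u$;
\item the diagonal term gives $E^{i,\top}\Gamma^i(u)\widetilde X^i_u(s)$ for $s<u$;
\item the drift part gives $\int_{\max(u,s)}^t\widetilde X^i_r(u)^\top P^i(r)\widetilde X^i_r(s)\,dr$, using $P^i=\Gamma^{i,\top}\Gamma^i$.
\end{itemize}
The indicator conventions (zero extension off $\Delta_T$) handle the range restrictions.

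The main obstacle is the identification $\widetilde X_r=\int_0^r\widetilde X^i_r(s)\,dW_s$. The kernel of $X_r-\widehat X^i_r$ with respect to $W$ is $X_r(s)$ minus the kernel of $\widehat X^i_r$. That is a deterministic function, but it must be shown equal to $\Cov(X_r,dW_s\mid\calF^i_r)/ds$. I would prove this via the orthogonality $\widetilde X_r\perp\calF^i_r$, which gives $\Cov(\widetilde X_r,dW_s)=\Cov(\widetilde X_r,dW_s-d_s\widehat W^i_r(s))$, and this equals the conditional covariance.

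The final uniqueness claims I would defer to Definition~\ref{def:filtering_kernel_system} and Appendix~\ref{app:filtering}. There, $\widetilde X^i=X-\widehat X^i$ is expressed through $F^i$ by \eqref{eq:Xtilde_from_F}, and the forward system is a Volterra-type equation with bounded coefficients on $\Delta_T$, so Picard iteration yields uniqueness.
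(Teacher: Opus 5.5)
Your proposal is correct. Its second half (expanding $dI^i$ in primitive noise, applying stochastic Fubini, and collecting the three contributions to $F^i_t(u,s)$) is the paper's own derivation in Appendix~\ref{app:filtering}. The first half takes a genuinely different route.

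The paper postulates the dynamics \eqref{eq:What_dynamics_ansatz} and verifies them with the exponential test-martingale identity \eqref{eq:char_identity}. This gives the gain $B^Y(u,t)$ in terms of the conditional covariance kernel $\mathsf C^i_t(u,s)$ of $W$, for a general drift kernel $C(t,s)$. Only afterwards does the paper pass to innovation form and identify the gain with $\widetilde X^i_t(u)^\top\Gamma^i(t)^\top$.

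You instead project directly onto the innovation Brownian motion. You compute the integrand $\frac{d}{dr}\Cov(W_u,I^i_r)$ from $dI^i_r=E^i\,dW_r+\Gamma^i(r)(X_r-\widehat X^i_r)\,dr$ and the orthogonality of the error to $\calF^i_r$. What this buys you:
\begin{itemize}
\item the innovation form and part (ii) come out in one step, with no ansatz or coefficient matching;
\item it spells out the orthogonality argument identifying the Wiener kernel of $X_r-\widehat X^i_r$ with the conditional covariance density, which the paper compresses into an appeal to It\^o isometry.
\end{itemize}

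The price is that you need the integrand to be deterministic. There are two ways to get this.
\begin{itemize}
\item The Gaussian spaces of $Y^i|_{[0,t]}$ and $I^i|_{[0,t]}$ coincide. This holds by inverting a Volterra relation with square-integrable kernel. It is not immediate from joint Gaussianity, because $X$ loads on $W^i$ through player $i$'s own feedback, so your ``hence'' needs that argument or a reference.
\item You read the gain off the FKK equation, where in the Gaussian case it is deterministic.
\end{itemize}
The paper's test functions avoid this issue, since exponentials of $\int\vec r\,dY^i$ already characterize $\calF^i_t$-conditional expectations.

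Your closing uniqueness step is only a sketch, as is the paper's. The system \eqref{eq:F_evolution}--\eqref{eq:F_boundary} is quadratic in $F^i$ through \eqref{eq:Xtilde_from_F}. Its new boundary values are also coupled to $F^i_t$ at the same time $t$. So ``bounded coefficients plus Picard'' gives at most a local contraction, not uniqueness on all of $[0,T]$. The paper argues something different: uniqueness of the representing kernel, via It\^o isometry.

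You should also state explicitly that your kernel solves the system. This follows directly from \eqref{eq:F_explicit}: differentiate in $t$ for $u,s<t$, then set $s=t$ or $u=t$, where causality leaves a single term.
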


\paragraph{Best-response closure and information wedges.}
When opponents use noise-state linear strategies, a player's best response over the full admissible $L^2$ class remains noise-state linear. The maximum principle yields deterministic adjoint kernels and the information wedge $\mathcal{V}^i(t)$.

\begin{thm}[Multiplayer belief-adjoint kernels and information wedges]\label{thm:wedge-adjoints}
Fix a player $i$ and suppose each opponent $k\neq i$ uses a noise-state linear strategy with deterministic impulse-response maps, which pins down a deterministic forward environment
(state impulse response $X(\cdot,\cdot)$, filtering objects $(P^k,\widetilde X^k,F^k)$, and policy kernels $D^k$).
There exist linear processes $H^{X,i}_t$ and $H^{k,i}_t(u)$, $k\neq i$, such that the following system holds.
For each opponent $k\neq i$, define
\begin{equation}
\label{eq:wedge_component_main}
\mathcal V^{k,i}_t
:=
\Gamma^k(t)^\top E^kH^{k,i}_t(t)
+P^k(t)\int_0^t \widetilde X^k_t(z)\,H^{k,i}_t(z)\,dz,
\qquad
\mathcal V^i_t:=\sum_{k\neq i}\mathcal V^{k,i}_t .
\end{equation}
The physical costate and belief adjoints satisfy
\begin{align}
\frac{d}{dt}H^{X,i}_t
&=
\underbrace{\left[-\big(G^{XX,i}(t)X_t+G^{X,i}_t\big)
-A(t)^\top H^{X,i}_t\right]}_{\text{perfect-information costate block}}
-\underbrace{\mathcal V^i_t}_{\text{information wedge}}, \label{eq:HX}\\[0.3em]
\frac{d}{dt}H^{k,i}_t(u)
&=
-\big(D^k_t(u)\big)^\top H^{X,i}_t
+X_t(u)^\top\mathcal V^{k,i}_t,
\qquad k\neq i, \label{eq:Hk}
\end{align}
with terminal conditions
$H^{X,i}_T=G^{XX,i}(T)X_T+G^{X,i}_T$ and $H^{k,i}_T(\cdot)=0$.
All coefficients, $P^k(t)$, $\widetilde X^k_t(z)$, $D^k_t(u)$, and $X_t(u)$, are determined by opponents' frozen strategy maps and do not change when player~$i$ deviates.
\end{thm}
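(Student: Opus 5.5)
Player $i$'s best-response problem is a stochastic control problem whose state is augmented: the physical state $X$ together with each opponent's noise-state (equivalently the filtered increments $d_u\widehat W^k_t(u)$), all propagated through maps that are frozen by the opponents' strategies. I would set up this augmented system, derive the Hamiltonian, and read off the adjoint equations by a perturbation (variational) argument. Because the dynamics are linear and the cost quadratic, the adjoints inherit the primitive-noise linear form of Definition~\ref{def:linear_process}.

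\textbf{Step 1: forward environment under a deviation.} Fix opponents' maps and perturb player $i$'s control to $D^i+\epsilon\,\delta D$, with $\delta D$ an arbitrary $\calF^i$-adapted $L^2$ process. By \eqref{eq:state_dynamics}, $\delta X$ satisfies $d\,\delta X_t=(A\,\delta X_t+\delta D_t+\sum_{k\ne i}\delta D^k_t)\,dt$. Opponent $k$'s signal drifts by $\Gamma^k\delta X$. Since opponent $k$'s filter is not recomputed, Theorem~\ref{thm:noise_state_filter_compact}(ii) shows that the innovation perturbation is $\Gamma^k(t)\,\delta X_t\,dt$. This feeds into $d_u\widehat W^k_t(u)$ through the gain $\widetilde X^k_t(u)^\top\Gamma^k(t)^\top$, which also accounts for the directly observed block $E^k$. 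Writing $\delta\widehat W^k_t(u)$ for the perturbed belief path, I get
\[
\delta D^k_t=\int_0^t D^k_t(u)\,d_u\,\delta\widehat W^k_t(u).
\]
The term $\Gamma^{k\top}E^k$ at $u=t$ accounts for the first summand of \eqref{eq:wedge_component_main}. The integrated $P^k\widetilde X^k$ term comes from part (ii) accumulated over $u<t$.

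\textbf{Step 2: duality pairing.} I would introduce candidate adjoints $H^{X,i}$, paired with $\delta X$, and $H^{k,i}_t(u)$, paired with the belief perturbation $\delta\widehat W^k_t(u)$. Next I apply Itô's product rule to
\[
\langle H^{X,i}_t,\delta X_t\rangle+\sum_{k\ne i}\int_0^t H^{k,i}_t(u)\,\delta\widehat W^k_t(u)\,du,
\]
integrate over $[0,T]$, and take expectations. I then choose the drifts of $H$ so that every term in $\delta X$ and $\delta\widehat W^k$ cancels against the first variation of the cost, which is $2\,\E\int\langle G^{XX,i}X+G^{X,i},\delta X\rangle$ plus the terminal term. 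This is a matching of coefficients:
\begin{itemize}
\item The $\delta X$ coefficients yield \eqref{eq:HX}. The belief channel contributes $-\mathcal V^i_t$, because $\delta X$ enters every opponent's belief through $\Gamma^k$, weighted by $H^{k,i}$.
\item The $\delta\widehat W^k$ coefficients yield \eqref{eq:Hk}. The term $-D^{k\top}H^{X,i}$ arises because beliefs move $X$ through $D^k$. The term $X_t(u)^\top\mathcal V^{k,i}_t$ arises from re-expressing belief paths in primitive-shock coordinates via the kernel $X_t(u)$.
\item The terminal conditions follow from the terminal cost and from the absence of any terminal payoff on beliefs.
\end{itemize}
The directional derivative then reduces to $\E\int\langle 2G^{DD,i}D^i+H^{X,i},\delta D\rangle\,dt$, which is the first-order condition.

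\textbf{Step 3: linear-process form.} The system \eqref{eq:HX}--\eqref{eq:Hk} is a linear backward system in $(H^{X,i},H^{k,i})$ with deterministic coefficients. These coefficients are frozen by the opponents' maps, which is the final claim of the theorem, since Step 1 used only the frozen maps. Its forcing $G^{XX,i}X+G^{X,i}$ is a linear process. I would solve the system pathwise as a Volterra/ODE system in $t$ for each fixed $(u,s)$ kernel index. Well-posedness follows from boundedness and Gronwall, so $H^{X,i}_t=\bar H_t+\int_0^t H_t(s)\,dW_s$ is a deterministic-kernel linear process, and likewise for $H^{k,i}$.

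\textbf{Main obstacle.} The hardest step is Step 2's treatment of the belief perturbation. The path-indexed object $\delta\widehat W^k_t(\cdot)$ evolves in $t$ for every $u\le t$, including a boundary contribution at $u=t$ (the directly observed block). Getting the two pieces of $\mathcal V^{k,i}$ correct, and confirming that no extra Itô correction appears, requires care. It also relies on the fact that the perturbation is drift-only, which holds because the perturbed signals are absolutely continuous with respect to the unperturbed ones. I would first verify the identity by differentiating $\int_0^t H^{k,i}_t(u)\,\delta\widehat W^k_t(u)\,du$ in $t$, using Theorem~\ref{thm:noise_state_filter_compact}(ii) and the Leibniz rule, before assembling the full duality.
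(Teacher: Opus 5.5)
Your duality strategy is workable, but the forward linearization in Step~1 is wrong, and the error is exactly where the second term of \eqref{eq:Hk} comes from. Opponent $k$'s frozen filter is applied to the deviated signal, so $Y^k$ is not the only thing that moves. The estimate the filter subtracts, $\widehat X^k_t=\bar X_t+\int_0^t X_t(r)\,d_r\widehat W^k_t(r)$, moves as well. Write $\delta(d_u\widehat W^k_t(u))=\delta w^k_t(u)\,du$. Then the innovation perturbation is
\[
\delta(dI^k_t)=\Gamma^k(t)\,\iota^k_t\,dt,
\qquad
\iota^k_t:=\delta X_t-\int_0^t X_t(r)\,\delta w^k_t(r)\,dr,
\]
and not $\Gamma^k(t)\,\delta X_t\,dt$. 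This closed loop inside opponent $k$'s filter is what the paper's \eqref{eq:delta_w_dynamics_revised}--\eqref{eq:delta_w_birth_revised} encode. Pair $H^{k,i}_t(u)$ with the increment density $\delta w^k_t(u)$, and use the birth condition, the interior update and the Leibniz rule. You get
\[
\frac{d}{dt}\int_0^t H^{k,i}_t(u)^\top\delta w^k_t(u)\,du
=(\mathcal V^{k,i}_t)^\top\iota^k_t+\int_0^t\partial_tH^{k,i}_t(u)^\top\delta w^k_t(u)\,du .
\]
The $\delta X_t$ part of $\iota^k_t$ gives the wedge in \eqref{eq:HX}. The part $-\int_0^t X_t(r)\,\delta w^k_t(r)\,dr$ puts $-X_t(u)^\top\mathcal V^{k,i}_t$ into the coefficient of $\delta w^k_t(u)$, and that is precisely the term $+X_t(u)^\top\mathcal V^{k,i}_t$ in \eqref{eq:Hk}. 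With your Step~1 dynamics this contribution vanishes. You would recover \eqref{eq:HX}, but you would obtain $\partial_tH^{k,i}_t(u)=-D^k_t(u)^\top H^{X,i}_t$ instead of \eqref{eq:Hk}. Your account of the missing term, ``re-expressing belief paths in primitive-shock coordinates,'' does not correspond to any step of the argument. Moreover, \eqref{eq:F_rep_du_What} describes the filter on the candidate profile, not the frozen filter evaluated on deviated signals.

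There are three smaller points.
\begin{enumerate}
\item Pair $H^{k,i}_t(u)$ with the density $\delta w^k_t(u)$, as above, not with the path $\delta\widehat W^k_t(u)$ as in your display. The term $-D^k_t(u)^\top H^{X,i}_t$ in \eqref{eq:Hk} presumes that $H^{k,i}_t(u)$ is dual to the object that $D^k_t(u)$ multiplies in $\delta D^k_t$. Pairing with the path defines a different adjoint, related to $H^{k,i}$ by an integration in $u$, and that adjoint does not satisfy \eqref{eq:Hk}.
\item In Step~3 the kernels cannot be supported on $[0,t]$. Solving pathwise backward from $G^{XX,i}(T)X_T+G^{X,i}_T$, with forcing $G^{XX,i}(s)X_s$ for $s\ge t$, gives $H^{X,i}_t=\bar H^X_t+\int_0^T H^X_t(r)\,dW_r$. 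This is an anticipating process in the extended sense the paper uses; only $\E[H^{X,i}_t\mid\calF^i_t]$ is adapted. An adapted version would need a martingale term, which \eqref{eq:HX}--\eqref{eq:Hk} do not contain.
\item With the stated terminal normalization, the directional derivative is $2\,\E\int_0^T\langle G^{DD,i}(t)D^i_t+H^{X,i}_t,\delta D_t\rangle\,dt$, not $\E\int_0^T\langle 2G^{DD,i}(t)D^i_t+H^{X,i}_t,\delta D_t\rangle\,dt$.
\end{enumerate}

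With these repairs, your pairing argument is a valid alternative to the paper's. The paper instead builds the evolution family $\Phi^{XX},\Phi^{Xk}$ of the first-variation system (Definition~\ref{def:block_transition}). It defines the adjoints explicitly through \eqref{eq:def_HbarX_revised}--\eqref{eq:def_Hk_revised} and differentiates in $t$ using \eqref{eq:PhiXX_revised}--\eqref{eq:PhiXk_revised}. There the same filter feedback appears as the $X_t(u)$ terms of \eqref{eq:PhiXk_revised}. That route gets existence and the kernel form directly from the forward evolution family, whereas yours needs the separate well-posedness argument of Step~3.
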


The bracketed block in~\eqref{eq:HX} is the perfect-information costate equation; the remaining term is the information wedge.  In~\eqref{eq:Hk}, $-D^k_t(u)^\top H^{X,i}_t$ transmits the physical costate into opponent~$k$'s belief channel.  The term $\Gamma^k(t)^\top E^kH^{k,i}_t(t)$ prices the newly born innovation coordinate, and the integral term prices accumulated drift inference.

For a fixed profile, the continuation value is a functional of the physical state and the players' noise states, with adjoints $H^{X,i}_t=\partial_XV^i_t$ and $H^{k,i}_t(u)=\delta V^i_t/\delta\widehat W^k_t(u)$.  The wedge is the chain-rule term generated when a physical perturbation changes opponents' future noise states.  Because all coefficients in~\eqref{eq:HX}--\eqref{eq:Hk} are deterministic, the mean and response-map coefficients decouple and are solved separately.  Both wedge components vanish when opponents' signals are exogenous to individual controls.

\begin{cor}[No observational externality]\label{cor:exogenous_signal}
Suppose a unilateral deviation in $D^i$ leaves every opponent's observation law, and hence the fixed filter for each $\widehat W^k$, $k\neq i$, unchanged.  Then $\mathcal{V}^i_t\equiv0$, the belief-adjoint equations~\eqref{eq:Hk} decouple from the physical costate, and the optimality system reduces to the standard LQG game with fixed information structures.
\end{cor}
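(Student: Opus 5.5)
The argument has two steps: show that the belief adjoints $H^{k,i}$ vanish identically, so that $\mathcal V^{k,i}\equiv0$, and then read off the reduced system from \eqref{eq:HX}.

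\textbf{Origin of the belief adjoints.} In Theorem~\ref{thm:wedge-adjoints}, $H^{k,i}_t(u)$ is the functional derivative of player~$i$'s continuation value with respect to opponent $k$'s noise state, $H^{k,i}_t(u)=\delta V^i_t/\delta\widehat W^k_t(u)$. It arises as the Lagrange multiplier of the constraint describing how $\widehat W^k$ evolves when player~$i$ perturbs the state.

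By part (ii) of Theorem~\ref{thm:noise_state_filter_compact}, that evolution is driven by the innovation $dI^k_t=dY^k_t-\Gamma^k(t)\widehat X^k_t\,dt$. A deviation of player~$i$ enters $dY^k_t$ through $\Gamma^k(t)X_t\,dt$ and through the fixed filter applied to the perturbed path. These are exactly the two terms of \eqref{eq:wedge_component_main}:
\begin{itemize}
\item the newly born coordinate, $\Gamma^k(t)^\top E^kH^{k,i}_t(t)$;
\item accumulated drift inference, $P^k\int\widetilde X^k H^{k,i}$.
\end{itemize}

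\textbf{Step 1: the forward sensitivity vanishes.} Under the hypothesis, the law of $Y^k$, and hence the filtered path $\widehat W^k[Y^k]$, is invariant to $D^i$. So the map from a perturbation $\delta D^i$ to $\delta\widehat W^k$ is zero, and so is its directional derivative.

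The Lagrangian used to derive Theorem~\ref{thm:wedge-adjoints} then contains no coupling between the physical state and the $\widehat W^k$ constraint. The multiplier equations no longer receive the source $\mathcal V^{k,i}$ fed back through $X_t(u)^\top\mathcal V^{k,i}_t$. Equivalently, the variation of $J^i$ with respect to $\widehat W^k$ is never activated by player~$i$'s control.

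Concretely, I would redo the variational computation with $\partial(dY^k)/\partial X\equiv0$ along admissible deviations. This means replacing $\Gamma^k$ by $0$ in the sensitivity of the observation equation, while keeping $\Gamma^k$ in the filter coefficients $\widetilde X^k,F^k$, which are frozen anyway. The formula \eqref{eq:wedge_component_main} then gives $\mathcal V^{k,i}_t=0$ for each $k\neq i$, and hence $\mathcal V^i_t\equiv0$.

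\textbf{Step 2: the reduced system.} With $\mathcal V^{k,i}\equiv0$, equation \eqref{eq:HX} becomes the perfect-information costate equation $\dot H^{X,i}=-(G^{XX,i}X+G^{X,i})-A^\top H^{X,i}$ with terminal condition $G^{XX,i}(T)X_T+G^{X,i}_T$. This is the standard adjoint of an LQ game in which opponents' actions $D^k$ are exogenous linear processes with fixed filters.

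Equation \eqref{eq:Hk} reduces to $\dot H^{k,i}_t(u)=-D^k_t(u)^\top H^{X,i}_t$ with zero terminal condition. It is a one-way, pure-quadrature equation: $H^{X,i}$ is determined without reference to $H^{k,i}$, and $H^{k,i}$ is read off afterwards. Because it does not feed back into the costate or the first-order condition for $D^i$ (no wedge), it decouples. The remaining FOC, $G^{DD,i}D^i=-\E[H^{X,i}\mid\calF^i_t]$, together with the frozen forward environment is the fixed-information LQG optimality system.

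\textbf{Main obstacle.} The delicate step is the consistency of Step 1 with the equations as stated. In \eqref{eq:wedge_component_main}, $\mathcal V^{k,i}$ is written through $\Gamma^k$ and $P^k$, which need not vanish under the hypothesis. So I must argue that, in the exogenous case, the derivation never produces those terms, rather than that the stated formula evaluates to zero.

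I would handle this by tracking where $\Gamma^k(t)X_t$ enters the Hamiltonian, namely as the drift of $Y^k$ seen by the frozen filter, and showing that invariance of the observation law forces that drift's dependence on player~$i$'s control to be null. The alternative is to define the coupled pair $(H^{k,i},\mathcal V^{k,i})$ as the unique solution of the linear Volterra system \eqref{eq:Hk}–\eqref{eq:wedge_component_main} with the coupling source removed. Uniqueness of that system then gives $H^{k,i}$ as pure quadrature and $\mathcal V^{k,i}=0$.
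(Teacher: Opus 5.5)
Your argument is correct and follows the paper's own (implicit) reasoning: the wedge is the adjoint of the $\delta X_s$ forcing in \eqref{eq:delta_w_dynamics_revised}--\eqref{eq:delta_w_birth_revised}, i.e.\ of the pathwise sensitivity $\Gamma^k\delta X$ of opponents' signals to the physical perturbation (it is this pathwise insensitivity, not mere invariance of the law, that your computation uses). Shutting that channel therefore removes $\mathcal V^i$ from \eqref{eq:HX} and leaves the perfect-information costate together with the fixed-information first-order condition. The one overstatement is that \eqref{eq:Hk} becomes a pure quadrature: the frozen filter's self-feedback $-\int_0^s X_s(r)\,\delta w^k_s(r)\,dr$ survives, so \eqref{eq:Hk} keeps its $X_t(u)^\top(\cdots)$ term, and your alternative via uniqueness of the Volterra system would not by itself force $\mathcal V^{k,i}=0$; this does not affect the conclusion, because $H^{k,i}$ no longer feeds into $H^{X,i}$.
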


\begin{lem}[Strict convexity of the best-response problem]
\label{lem:convexity_global}
Fix opponents' strategy maps $D^{-i}$ in the deterministic impulse-response class, including their fixed filters from signal histories to noise states.  Then player $i$'s induced objective
$D^i\mapsto J^i(D^i;D^{-i})$ is a strictly convex functional on the admissible $L^2$ control space.
Consequently, the spike-variation stationarity condition
\eqref{eq:FOC_control_revised} is not only necessary but sufficient: any admissible control satisfying it
is the unique global best response.
\end{lem}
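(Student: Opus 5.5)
The plan is to show that, once opponents' strategy maps and their fixed filters are frozen, the map from player~$i$'s control to the state is affine, and then to read off convexity from the quadratic cost.

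Fix $D^{-i}$ in the deterministic impulse-response class. Write the controlled state as $X = X^{0} + \mathcal{L}[D^i]$, where $X^0$ is the state under a reference control ($D^i\equiv 0$) and $\mathcal{L}$ is a linear operator. To build $\mathcal{L}$, track how a deviation $\delta D^i$ propagates:
\begin{itemize}
\item It enters the state drift in~\eqref{eq:state_dynamics} additively.
\item Through $\Gamma^k(t)X_t\,dt$ it shifts each opponent's observation drift.
\item Each opponent applies its fixed linear filter, a deterministic linear functional of $Y^k$ with kernels determined by $F^k$ and $\widetilde X^k$ as in Theorem~\ref{thm:noise_state_filter_compact}. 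This gives a linear shift of $\widehat W^k$.
\item Through the fixed noise-state linear map $D^k$ this becomes a linear shift of opponent~$k$'s action, which feeds back into the drift.
\end{itemize}
The resulting closed-loop perturbation $\delta X$ solves a linear Volterra system with bounded deterministic kernels. Because the system is causal and of Volterra type, it has a unique solution, and that solution depends linearly and boundedly (in $L^2(\Omega\times[0,T])$) on $\delta D^i$. The argument is Picard iteration together with a Gronwall-type bound for Volterra operators. This is exactly why Definition~\ref{def:nash_strategy} freezes maps rather than actions: with maps frozen, no filter is recomputed and everything stays linear in $\delta D^i$.

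Substituting $X = X^0 + \mathcal{L}[D^i]$ into~\eqref{eq:random_cost} turns $J^i$ into a sum of three kinds of term:
\begin{itemize}
\item quadratic forms $\E\!\int (\mathcal{L}D^i)^\top G^{XX,i}(\mathcal{L}D^i)\,dt$ and the terminal analogue, each nonnegative because $G^{XX,i}\succeq 0$;
\item terms linear in $D^i$;
\item the effort term $\E\!\int (D^i_t)^\top G^{DD,i}(t)D^i_t\,dt$.
\end{itemize}
The linear-coefficient terms $G^{X,i}$ stay affine even in the primitive-noise case, since $G^{X,i}$ does not depend on $D^i$.

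The effort term is strictly convex because $G^{DD,i}(t)$ is positive definite. I will assume a uniform lower bound $G^{DD,i}(t)\succeq c I$ with $c>0$, which I would state explicitly if it is not already implicit in the boundedness assumptions. This gives strong convexity on the Hilbert space of $\calF^i$-adapted $L^2$ controls. It follows that for $D\neq D'$ and $\lambda\in(0,1)$,
\[
J^i(\lambda D+(1-\lambda)D') \le \lambda J^i(D)+(1-\lambda)J^i(D') - \lambda(1-\lambda)\,c\,\|D-D'\|^2_{L^2}.
\]

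For sufficiency and uniqueness, $J^i$ is a continuous convex quadratic functional on the closed convex set of admissible controls, so it is Gâteaux differentiable. Its derivative in direction $\delta D$ is $2\E\!\int \langle \text{gradient}_t,\delta D_t\rangle dt$. The spike-variation condition~\eqref{eq:FOC_control_revised} says the $\calF^i_t$-conditional gradient vanishes for a.e.\ $t$, and by density of spike variations this makes the Gâteaux derivative vanish in every admissible direction. A vanishing Gâteaux derivative of a strictly convex functional identifies a global minimizer, and strict convexity makes that minimizer unique.

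The main obstacle is the first step: proving that the deviation-to-state map is well defined, linear, and bounded on all of $L^2$, including controls that are not noise-state linear. The fixed filters must be evaluated on deviated signal paths whose laws differ from the profile's, although they are mutually absolutely continuous as noted after Definition~\ref{def:nash_strategy}. I would handle this by writing every filter as a deterministic linear functional of the raw path $Y^k$ (stochastic integrals against $dY^k$ with deterministic kernels). Linearity is then pathwise, and the $L^2$ bounds follow from the Itô isometry applied to the $dW$ part plus Cauchy--Schwarz on the drift part.
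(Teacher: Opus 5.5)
The gap is in the domain on which you claim convexity.

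The rest of your argument matches the paper's own sketch. Frozen opponent maps and filters make the state affine in $D^i$, and the cost is then a nonnegative quadratic form plus affine terms plus the effort term. Your Volterra well-posedness and G\^ateaux-derivative steps are sound additions. You do not need the uniform bound $G^{DD,i}(t)\succeq cI$ for \emph{strict} convexity. Pointwise $G^{DD,i}(t)\succ0$ already gives $\E\int_0^T(D_t-D'_t)^\top G^{DD,i}(t)(D_t-D'_t)\,dt>0$ whenever $D\neq D'$ in $L^2$.

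You write ``the Hilbert space of $\calF^i$-adapted $L^2$ controls,'' but $\calF^i_t=\sigma(Y^i_s:s\le t)$ is generated by a signal whose drift $\Gamma^i(t)X_t$ depends on $D^i$ itself. Different controls are therefore a priori adapted to different filtrations. The convex combination $\lambda D[\cdot]+(1-\lambda)D'[\cdot]$ of two strategy maps, evaluated on the signal path it generates, is not the process $\lambda D+(1-\lambda)D'$. So the affine-state identity does not yet give a convexity inequality. You checked how the frozen opponent filters react to deviated paths, but not how player $i$'s own information reacts.

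The missing step is the one the paper uses in the proof of Corollary~\ref{cor:nsl-br} and in Appendix~\ref{sec:br_innovations}. Since $\mathcal L$ is a deterministic causal operator, $\int_0^t\Gamma^i(s)\mathcal L[D^i]_s\,ds$ is computable from player $i$'s own history. Hence the uncontrolled signal $Y^{i,0}:=Y^i-\int_0^{\cdot}\Gamma^i(s)\mathcal L[D^i]_s\,ds$ (equivalently, the innovation $I^i$) is recoverable by player $i$ and is the same process for every $D^i$.

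Parametrize admissible controls as $L^2$ processes adapted to the filtration of $Y^{i,0}$. Alternatively, take this as the definition of admissibility, the usual convention in partially observed LQ problems; it avoids a strong-solution question for the reverse inclusion $\calF^{Y^i}\subseteq\calF^{Y^{i,0}}$. Either way this is a fixed Hilbert space, and on it your convexity, sufficiency, and uniqueness arguments go through unchanged.
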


\begin{proof}[Proof sketch]
With opponents' maps fixed and linear, the closed-loop drift of $X$ is affine in $D^i$,
so $(X,D^{-i})$ depend affinely on $D^i$.  The cost \eqref{eq:objective} is quadratic with
$G^{DD,i}(t)\succ 0$, hence strictly convex in $D^i$.
\end{proof}

\begin{cor}[Noise-state linear best response (global closure)]\label{cor:nsl-br}
Under the conditions of Theorem~\ref{thm:wedge-adjoints}, fix an opponents' profile $D^{-i}$ of
noise-state linear strategies, including the associated fixed filters (Definition~\ref{def:noise_state_linear_control}).
If $D^i$ is a best response to $D^{-i}$ over the full admissible $L^2$ control class, then for a.e.\ $t\in[0,T]$,
\[
G^{DD,i}(t)D_t^i+\E[H^{X,i}_t\mid\calF_t^i]=0,
\qquad\text{i.e.,}\qquad
D_t^i=-(G^{DD,i}(t))^{-1}\E[H^{X,i}_t\mid\calF_t^i].
\]
Since $H^{X,i}_t$ is a linear process (in the extended sense allowing kernels with $u>t$, truncated by conditioning on $\calF_t^i$) and $\E[H^{X,i}_t\mid\calF_t^i]=\bar H^{X,i}_t+\int_0^t H^{X,i}_t(u)\,d\widehat W_t^i(u)$, every best response over the full admissible class is itself a noise-state linear control.
\end{cor}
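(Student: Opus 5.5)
The statement has two parts. The first is the first-order condition $G^{DD,i}D^i_t+\E[H^{X,i}_t\mid\calF^i_t]=0$. The second is that this condition forces $D^i$ to be noise-state linear. I will derive the first from a directional (Gateaux) derivative of $J^i$ combined with the adjoint system of Theorem~\ref{thm:wedge-adjoints}. The second will follow from linear-process algebra and Theorem~\ref{thm:noise_state_filter_compact}. Lemma~\ref{lem:convexity_global} guarantees that a best response exists as the unique stationary point, so it suffices to characterize stationarity.

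\textbf{Step 1 (variation).} Fix a best response $D^i$ and an arbitrary admissible direction $\delta D^i$, that is, an $\calF^i$-adapted process in $L^2$. Opponents' maps and filters are frozen, so by the affine structure noted in the proof of Lemma~\ref{lem:convexity_global} the perturbation propagates linearly. It gives a state variation $\delta X$, and opponent $k$ sees a drift change $\Gamma^k\delta X\,dt$ in $Y^k$. Through the fixed filter, this produces a noise-state variation $\delta\widehat W^k_t(u)$ obeying the linear update in part (ii) of Theorem~\ref{thm:noise_state_filter_compact}, with $dI^k$ replaced by $\Gamma^k\delta X\,dt$. The action variation is then $\delta D^k_t=\int_0^t D^k_t(u)\,d_u\delta\widehat W^k_t(u)$.

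\textbf{Step 2 (duality).} Compute $\frac{d}{d\e}J^i(D^i+\e\delta D^i)|_{\e=0}$. The quadratic cost gives $2\E\int_0^T\big[(G^{XX,i}X+G^{X,i})^\top\delta X+(G^{DD,i}D^i)^\top\delta D^i\big]dt$ plus the terminal term. Apply Itô's product rule to $H^{X,i}_t{}^\top\delta X_t+\sum_{k\neq i}\int_0^t H^{k,i}_t(u)^\top\, d_u\delta\widehat W^k_t(u)$ and substitute \eqref{eq:HX}--\eqref{eq:Hk}. The terms involving $\delta X$ and $\delta\widehat W^k$ should telescope. This is exactly where the wedge terms $\mathcal V^{k,i}_t$ are built to cancel. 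The cancellation uses the indirect-inference kernel integral $P^k\int\widetilde X^k H^{k,i}$ together with the newborn coordinate $\Gamma^{k\top}E^kH^{k,i}_t(t)$. After cancellation the derivative reduces to $2\E\int_0^T(G^{DD,i}D^i_t+H^{X,i}_t)^\top\delta D^i_t\,dt$. I expect this bookkeeping to be the main obstacle. In particular, I must check that the boundary contribution at $u=t$ matches the $E^k$ term and that the stochastic integrals are true martingales, which holds because all kernels are square integrable and deterministic. I would verify the cancellation first for a constant perturbation of $\delta X$ on a short interval and then extend by linearity.

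\textbf{Step 3 (conditioning and linearity).} Because $\delta D^i_t$ is $\calF^i_t$-measurable and otherwise arbitrary in $L^2$, the tower property turns the vanishing of the derivative into $G^{DD,i}D^i_t+\E[H^{X,i}_t\mid\calF^i_t]=0$ for a.e. $t$. For the closure claim, the system \eqref{eq:HX}--\eqref{eq:Hk} has deterministic coefficients and forcing $X$ that is linear. Solving it backward therefore gives $H^{X,i}_t=\bar H^{X,i}_t+\int_0^T H^{X,i}_t(u)\,dW_u$, where the kernel may extend to $u>t$ because of the backward terminal condition. Conditioning on $\calF^i_t$ kills the part with $u>t$ by independence of future increments. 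For $u\le t$, it replaces $dW_u$ by $d_u\widehat W^i_t(u)$ by linearity of conditional expectation and the Gaussian projection of Theorem~\ref{thm:noise_state_filter_compact}. The result is $D^i_t=-(G^{DD,i})^{-1}\big(\bar H^{X,i}_t+\int_0^tH^{X,i}_t(u)\,d_u\widehat W^i_t(u)\big)$, which has the form required by Definition~\ref{def:noise_state_linear_control}.

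One circularity has to be handled: $H^{X,i}$ depends on $X$, which depends on $D^i$ itself. I would resolve it by noting that the unique minimizer from Lemma~\ref{lem:convexity_global} can be obtained as the limit of the deterministic linear fixed-point map on kernels. That map is contractive on short intervals and can be pasted over $[0,T]$. Its fixed point is noise-state linear, and by uniqueness it coincides with $D^i$.
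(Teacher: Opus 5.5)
Your route differs from the paper's. The paper derives the FOC from spike variations and explicit transition kernels $\Phi^{XX},\Phi^{Xk}$ (Appendix~\ref{sec:control_foc}), then reads noise-state linearity directly off the FOC using invariance of $I^i$ and linearity of $H^{X,i}$. You use a Gateaux derivative with a duality pairing against \eqref{eq:HX}--\eqref{eq:Hk}, and you explicitly address the circularity ($H^{X,i}$ is a linear process only if $X$, hence $D^i$, is), which the paper's short proof does not spell out. The plan is sound, but Step~1 is wrong as written, and Step~2 then does not close.

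\textbf{Step 1.} The fixed filter's innovation is $dY^k_t-\Gamma^k(t)\widehat X^k_t\,dt$, where $\widehat X^k$ is the filter's own output. Writing $d_u\delta\widehat W^k_t(u)=\delta w^k_t(u)\,du$, the perturbation is therefore $\Gamma^k(t)e^k_t\,dt$ with $e^k_t:=\delta X_t-\int_0^tX_t(r)\,\delta w^k_t(r)\,dr$, not $\Gamma^k(t)\delta X_t\,dt$. Since part~(ii) of Theorem~\ref{thm:noise_state_filter_compact} covers only $u<t$, you also need the birth condition $\delta w^k_t(t)=E^{k,\top}\Gamma^k(t)e^k_t$. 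Together these are \eqref{eq:delta_w_dynamics_revised}--\eqref{eq:delta_w_birth_revised}.

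The term $X_t(u)^\top\mathcal V^{k,i}_t$ in \eqref{eq:Hk} is there precisely to absorb the feedback $-\int_0^tX_t(r)\,\delta w^k_t(r)\,dr$. Using $\delta X_t$ in place of $e^k_t$ leaves the residual $\sum_{k\neq i}(\mathcal V^{k,i}_t)^\top\int_0^tX_t(u)\,\delta w^k_t(u)\,du$. With the corrected dynamics one checks directly that
\[
\frac{d}{dt}\Big[(H^{X,i}_t)^\top\delta X_t+\sum_{k\neq i}\int_0^t H^{k,i}_t(u)^\top\delta w^k_t(u)\,du\Big]
=-\big(G^{XX,i}(t)X_t+G^{X,i}_t\big)^\top\delta X_t+(H^{X,i}_t)^\top\delta D^i_t .
\]
With $H^{X,i}_T=G^{XX,i}(T)X_T+G^{X,i}_T$, $H^{k,i}_T=0$ and $\delta X_0=0$, this yields your reduced derivative. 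It is an ordinary product rule: none of $H^{X,i}$, $H^{k,i}$, $\delta X$, $\delta w^k$ has a $dW_t$ term in its $t$-dynamics, so no martingale property is needed.

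\textbf{Closure step.} The second gap is existence. Lemma~\ref{lem:convexity_global} gives uniqueness, not existence, so you need some noise-state-linear control satisfying the FOC. Contraction on short intervals plus pasting does not by itself provide one. The kernel system couples forward equations (state and opponents' filter variations) with backward adjoints. Pasting therefore requires a uniform a priori bound on the map from the infinite-dimensional junction state (physical state plus opponents' noise-state coordinates) to the adjoint, and you have not supplied it.

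A route using only what you already have:
\begin{enumerate}
\item Let $\mathcal L$ be the set of noise-state-linear controls, i.e.\ deterministic intercepts plus deterministic-kernel integrals against $I^i$. By It\^o isometry, $\mathcal L$ is a closed subspace of the admissible $L^2$ space.
\item If $G^{DD,i}$ is uniformly positive definite, $J^i$ is coercive on $\mathcal L$ and has a minimizer $D^\ast\in\mathcal L$.
\item At $D^\ast$ the state, and hence $H^{X,i}$, are linear processes. By your Step~3, $g_t:=G^{DD,i}(t)D^\ast_t+\E[H^{X,i}_t\mid\calF^i_t]$ then lies in $\mathcal L$.
\item Optimality on $\mathcal L$ together with the corrected Step~2 makes $g$ orthogonal to $\mathcal L$, so $g=0$.
\end{enumerate}
Thus $D^\ast$ is stationary against every $L^2$ direction. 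By convexity it is a global best response, and by strict convexity it is the only one.
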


\begin{proof}
Strict convexity (Lemma~\ref{lem:convexity_global}) makes~\eqref{eq:FOC_control_revised} necessary and sufficient. With opponents' maps and filters fixed, the innovation $dI_t^i:=dY_t^i-\Gamma^i(t)\widehat X_t^i dt$ is a standard $(\calF^i,\Prob)$-Brownian motion whose law does not depend on $D^i$ (Liptser--Shiryaev~\cite{LiptserShiryaev}, Theorem~7.12), and $\widehat W^i$ is a deterministic linear functional of $I^i_{[0,t]}$. Thus the FOC expresses $D^i_t$ as a deterministic affine functional of $I^i$, i.e. in noise-state linear form. The filters are determined by the candidate profile, so equilibrium remains a fixed point.
\end{proof}

\begin{rem}\label{rem:restricted_NE_is_NE}
Since every best response over the full admissible $L^2$ class is noise-state linear
(Corollary~\ref{cor:nsl-br}), any Nash equilibrium in noise-state linear controls is automatically a Nash equilibrium
of the unrestricted game.
\end{rem}

\begin{rem}[Separation and the information wedge]\label{rem:separation_failure}
Standard LQG separation holds the feedback gain fixed as signal precision changes. Here the FOC, $D_t^i=-(G^{DD,i}(t))^{-1}\E[H^{X,i}_t\mid\calF_t^i]$, depends on the projected physical costate. When actions affect opponents' signals, that costate contains the wedge, which depends on opponents' filters and belief adjoints. Changing information therefore generally changes the policy rule itself, not only the posterior variance. Special symmetries could align the wedge with the posterior-state kernel, but this is not what happens in the benchmark. If opponents' signals are exogenous to individual controls, the standard fixed-information representation is recovered.
\end{rem}

\section{Conclusion}
\label{sec:conclusion}

Finite-player dynamic games with dispersed private information are difficult because actions affect both the physical state and opponents' learning, making the usual state variables nonrecursive. This paper replaces belief hierarchies with the noise state: beliefs about primitive uncertainty, with higher-order beliefs generated by composing response maps. The broader program is to use the same primitive-shock state in richer conditional-Gaussian linear economies, especially sparse macroeconomic networks where aggregate mean-field modes coexist with local informational wedges that need not average out absent symmetry.

In the LQG benchmark, the noise-state representation becomes explicit. Beliefs, value gradients, and policy rules are deterministic impulse-response functions, and equilibrium in noise-state linear strategies is a fixed point in those functions. Best-response closure verifies that any fixed point is a Nash equilibrium against arbitrary admissible $L^2$ deviations. The associated first-order system contains an information wedge, the shadow price of changing opponents' posteriors.

The two-player benchmark shows that this wedge is economically important. Pooling gains are mostly strategic rather than estimation-based. Optimal precision allocation can generate information starvation: concentrating signals on the efficient player benefits both sides, while informing the inefficient player increases bilateral waste. Signal precision also changes equilibrium policy rules themselves, so the usual separation logic fails.

\appendix

\section{Filtering}
\label{app:filtering}

We prove Theorem~\ref{thm:noise_state_filter_compact}: a generic linear-filtering result is specialized to the physical observation model and then converted from innovations to primitive-noise coordinates.

\subsection{Dynamics of the noise-state}
\label{app:What_dynamics}

Fix a player $i$.  Recall the noise-state
$\widehat W_t^i(u):=\E[W_u\mid\calF_t^i]$, $0\le u\le t$,
and the observation channel
\[
dY_t^i = \Big(\bar C(t)+\int_0^t C(t,s)\,dW_s\Big)\,dt + E^i\,dW_t,
\qquad Y_0^i=0,
\]
where $C(t,s)$ is deterministic.
Since $\bar C$ is deterministic, we may assume without loss that $\bar C\equiv 0$
by replacing $Y_t^i$ with $Y_t^i-\int_0^t\bar C(r)\,dr$.

\begin{prop}[Dynamics of $\widehat W_t^i(u)$]
\label{prop:What_dynamics}
There exist deterministic matrix-valued functions $B^Y(u,t)$ and $B(u;t,s)$
such that, for each fixed $u$,
\begin{equation}
\label{eq:What_dynamics_integral}
\widehat W_t^i(u) - \widehat W_{t_0}^i(u)
=
\int_{t_0}^t\!\!\int_0^r B(u;r,s)\,d\widehat W_r^i(s)\,dr
\;+\;
\int_{t_0}^t B^Y(u,r)\,dY_r^i,
\qquad t\ge t_0\ge u,
\end{equation}
or equivalently in differential form,
\begin{equation}
\label{eq:What_dynamics_ansatz}
d_t\widehat W_t^i(u)
=
\Big(\int_0^t B(u;t,s)\,d\widehat W_t^i(s)\Big)\,dt
+
B^Y(u,t)\,dY_t^i,
\end{equation}
with
\begin{align}
B^Y(u,t)
&=
\int_0^t \partial_s\mathsf{C}_t^i(u,s)\,C(t,s)^\top\,ds
\;+\; E^{i,\top}\mathbf{1}_{\{u\ge t\}},
\label{eq:BY_formula}\\[4pt]
B(u;t,s)
&=
-B^Y(u,t)\,C(t,s),
\label{eq:B_formula}
\end{align}
where $\mathsf{C}_t^i(u,s):=\Cov(W_u,W_s\mid\calF_t^i)$ is the deterministic
conditional error covariance kernel.
\end{prop}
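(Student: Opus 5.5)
The plan is to derive \eqref{eq:What_dynamics_ansatz} from the general nonlinear filtering equation (Fujisaki--Kallianpur--Kunita, equivalently Liptser--Shiryaev, Theorem~7.12 in the Gaussian case), applied to the signal $\xi=W_u$ for fixed $u$. Then I would substitute the innovation in terms of $dY^i$ and $\widehat W^i$.

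\textbf{Setup.} After reducing to $\bar C\equiv0$, the pair $(W,Y^i)$ is jointly Gaussian with deterministic coefficients. Hence the conditional law of $W$ given $\calF^i_t$ is Gaussian, and its covariance $\mathsf C^i_t(u,s)$ equals the unconditional error covariance $\E[(W_u-\widehat W^i_t(u))(W_s-\widehat W^i_t(s))^\top]$. This covariance is deterministic. Write the observation drift as $h_t:=\int_0^t C(t,s)\,dW_s$.

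\textbf{Step 1: the innovation in noise-state coordinates.} I first show that
\[
\E[h_t\mid\calF^i_t]=\int_0^t C(t,s)\,d_s\widehat W^i_t(s).
\]
I would prove this for step-function $C(t,\cdot)$, where it is linearity of conditional expectation applied to Riemann sums, and then pass to the $L^2$ limit using $\int_0^t\|C(t,s)\|^2ds<\infty$. It follows that
\[
dI^i_t=dY^i_t-\Big(\int_0^tC(t,s)\,d\widehat W^i_t(s)\Big)dt,
\]
and $I^i$ is an $\calF^i$-Brownian motion.

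\textbf{Step 2: the filtering equation for $W_u$.} Represent $W_u$ through the $\mathbb F$-martingale $M_t:=\E[W_u\mid\calF_t]=W_{t\wedge u}$. Its cross-variation with the observation noise $E^iW$ is
\[
d\langle M,E^iW\rangle_t^\top=E^{i,\top}\1_{\{t\le u\}}\,dt.
\]
The FKK equation then gives
\[
d_t\widehat W^i_t(u)=\Big(\Cov(W_u,h_t\mid\calF^i_t)+E^{i,\top}\1_{\{u\ge t\}}\Big)dI^i_t .
\]
The conditional cross-covariance is computed again through Riemann sums for $h_t$. Since $\Cov(W_u,W_s\mid\calF^i_t)=\mathsf C^i_t(u,s)$, the increment in $s$ gives
\[
\Cov(W_u,h_t\mid\calF^i_t)=\int_0^t\partial_s\mathsf C^i_t(u,s)\,C(t,s)^\top ds .
\]
This is exactly \eqref{eq:BY_formula}. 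Substituting $dI^i_t$ from Step~1 yields \eqref{eq:What_dynamics_ansatz} with $B(u;t,s)=-B^Y(u,t)C(t,s)$. Integrating from $t_0$ to $t$ gives \eqref{eq:What_dynamics_integral}.

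\textbf{Main obstacle.} I expect the hard step to be the regularity needed to write the cross-covariance as $\int\partial_s\mathsf C^i_t(u,s)\,C(t,s)^\top ds$. One must check that $s\mapsto\mathsf C^i_t(u,s)$ is absolutely continuous, with an $L^2$ derivative in $s$, so that the Wiener-integral limit is legitimate. The first thing to try is to write $W_s=\int_0^s dW_r$ and note that
\[
\mathsf C^i_t(u,s)=\min(u,s)\,I-\Cov\big(\widehat W^i_t(u),\widehat W^i_t(s)\big),
\]
where $\widehat W^i_t(s)$ is an $L^2$-Wiener integral against $I^i$ with kernel absolutely continuous in $s$ (by Liptser--Shiryaev's Gaussian representation). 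This gives the derivative explicitly. A secondary point is the boundary case $u=t$, where the indicator term switches on. It is handled by the cross-variation computation above and contributes only on a null set of times for $u<t$.
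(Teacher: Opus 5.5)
Your proposal is correct, but it reaches the result by a different route from the paper. The paper does not invoke a general filtering theorem. It posits \eqref{eq:What_dynamics_ansatz} as an ansatz with unknown deterministic $B,B^Y$ and introduces the test processes $d\varphi^r_t=i\,\varphi^r_t(dY^i_t)^\top\vec r(t)$. It then differentiates both sides of the characterizing identity $\E[\widehat W^i_t(u)\varphi^r_t]=\E[W_u\varphi^r_t]$ in $t$: It\^o's product rule on the left, and the tower property with the split $\mathcal M^i_t(u,z)=\mathsf C^i_t(u,z)+\widehat W^i_t(u)\widehat W^i_t(z)^\top$ on the right. Finally it reads off $B^Y$ from the terms multiplying $\vec r(t)$, and $B=-B^YC$ from requiring the leftover drift to vanish.

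You instead apply the Fujisaki--Kallianpur--Kunita equation to the $\mathbb F$-martingale $W_{t\wedge u}$. The gain then comes out directly as the conditional covariance with the drift plus $d\langle M,E^iW\rangle_t/dt$, and $B$ follows by rewriting the innovation. The computational core is shared: your Riemann-sum evaluation of $\Cov(W_u,h_t\mid\calF^i_t)$ is exactly the covariance piece of \eqref{eq:M_split}.

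What your route buys is existence. FKK is a statement about the true conditional expectation. The paper's coefficient matching, by contrast, presupposes that $\widehat W^i_t(u)$ already has dynamics of the form \eqref{eq:What_dynamics_ansatz}. In exchange, the paper's argument is self-contained, using only It\^o calculus and the totality of the test processes.

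You also address the a.e.\ differentiability of $s\mapsto\mathsf C^i_t(u,s)$, which the paper uses without comment. One correction there. Writing $\widehat W^i_t(s)=\int_0^t K(s,r)\,dI^i_r$, the kernel $K(s,r)$ is not absolutely continuous in $s$. It contains the directly observed piece $E^{i,\top}\1_{\{r<s\}}$, which is the source of the $\Pi^i\,dW_u$ term in Theorem~\ref{thm:noise_state_filter_compact}. The conclusion you need still holds. That piece contributes $\int_0^{s\wedge t}K(u,r)\,dr\,E^i$ to $\Cov(\widehat W^i_t(u),\widehat W^i_t(s))$, which is absolutely continuous in $s$. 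The remainder of the kernel, $\E[W_s(h_r-\E[h_r\mid\calF^i_r])^\top]$, is absolutely continuous in $s$ for each fixed $r$.
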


\begin{proof}
We use the characterizing identity for conditional expectations via
exponential test martingales.
Fix a deterministic vector function $\vec r(\cdot)$ and define
$\varphi_t^r$ by $\varphi_0^r=1$ and
$d\varphi_t^r = i\,\varphi_t^r\,(dY_t^i)^\top\vec r(t)$.
A necessary and sufficient condition characterizing $\widehat W_t^i(u)$ is
\begin{equation}
\label{eq:char_identity}
\E\!\big[\widehat W_t^i(u)\,\varphi_t^r\big]
= \E\!\big[W_u\,\varphi_t^r\big]
\qquad\text{for all }\vec r(\cdot).
\end{equation}
We compute the time derivative of each side independently, then match.

\medskip\noindent\textit{Left-hand side.}
Apply It\^o's product rule to $\widehat W_t^i(u)\,\varphi_t^r$, using the
ansatz~\eqref{eq:What_dynamics_ansatz} and
$d\varphi_t^r = i\,\varphi_t^r(dY_t^i)^\top\vec r(t)$:
\[
d\big(\widehat W_t^i(u)\,\varphi_t^r\big)
=
i\,\widehat W_t^i(u)\,\varphi_t^r\,(dY_t^i)^\top\vec r(t)
+\varphi_t^r\,d_t\widehat W_t^i(u)
+d\langle\widehat W^i(u),\varphi^r\rangle_t.
\]
The quadratic covariation contributes
$d\langle\widehat W^i(u),\varphi^r\rangle_t
=i\,\varphi_t^r\,B^Y(u,t)\,\vec r(t)\,dt$.
Taking expectations and collecting the $dt$ terms gives
\begin{align}
\frac{d}{dt}\E\!\big[\widehat W_t^i(u)\,\varphi_t^r\big]
&=
i\,\E\!\bigg[\varphi_t^r
\Big(\widehat W_t^i(u)\!\int_0^t C(t,s)\,d\widehat W_t^i(s)^\top
+B^Y(u,t)\Big)\bigg]\vec r(t)
\notag\\
&\quad+
\E\!\bigg[\varphi_t^r\int_0^t\!\big(B(u;t,s)+B^Y(u,t)\,C(t,s)\big)
\,d\widehat W_t^i(s)\bigg].
\label{eq:LHS_derivative}
\end{align}

\medskip\noindent\textit{Right-hand side.}
Expand $\E[W_u\,\varphi_t^r]$ using the SDE for $\varphi^r$ and apply the
tower property in the form
$\E[\varphi_s^r\,L]=\E[\varphi_s^r\,\E[L\mid\calF_s^i]]$.
Define the conditional second-moment kernel
\[
\mathcal{M}_s^i(u,z)
:=\E[W_u W_z^\top\mid\calF_s^i]
=\mathsf{C}_s^i(u,z)+\widehat W_s^i(u)\,\widehat W_s^i(z)^\top.
\]
Then
\[
\E\!\Big[W_u\!\int_0^s C(s,z)\,dW_z^\top\;\Big|\;\calF_s^i\Big]
=
\int_0^s\partial_z \mathcal{M}_s^i(u,z)\,C(s,z)^\top\,dz.
\]
Splitting $\mathcal{M}_s^i=\mathsf{C}_s^i+\widehat W_s^i\,\widehat W_s^{i,\top}$ separates
this into a covariance piece and a product-of-means piece:
\begin{equation}
\label{eq:M_split}
\int_0^s\partial_z \mathcal{M}_s^i(u,z)\,C(s,z)^\top\,dz
=
\int_0^s\partial_z\mathsf{C}_s^i(u,z)\,C(s,z)^\top\,dz
\;+\;
\widehat W_s^i(u)\!\int_0^s C(s,z)\,d\widehat W_s^i(z)^\top.
\end{equation}
Differentiating $\E[W_u\,\varphi_t^r]$ and collecting $dt$ terms then gives
\begin{align}
\frac{d}{dt}\E\!\big[W_u\,\varphi_t^r\big]
&=
i\,\E\!\bigg[\varphi_t^r
\Big(\int_0^t\partial_s\mathsf{C}_t^i(u,s)\,C(t,s)^\top\,ds
\;+\;
\widehat W_t^i(u)\!\int_0^t C(t,s)\,d\widehat W_t^i(s)^\top
\;+\; E^{i,\top}\mathbf{1}_{\{u\ge t\}}\Big)\bigg]\vec r(t).
\label{eq:RHS_derivative}
\end{align}

\medskip\noindent\textit{Coefficient matching.}
Equating~\eqref{eq:LHS_derivative} and~\eqref{eq:RHS_derivative} for all
$\vec r(\cdot)$ and all $t$ yields two conditions.
Matching the terms multiplying $\vec r(t)$ (after canceling the common
$\widehat W_t^i(u)\int C\,d\widehat W^{i,\top}$ terms that appear on both
sides via~\eqref{eq:M_split}) gives~\eqref{eq:BY_formula}.
Requiring the residual drift in~\eqref{eq:LHS_derivative} to vanish gives
\[
\int_0^t\big(B(u;t,s)+B^Y(u,t)\,C(t,s)\big)\,d\widehat W_t^i(s)=0,
\]
hence $B(u;t,s)=-B^Y(u,t)\,C(t,s)$, which is~\eqref{eq:B_formula}.
\end{proof}

\subsection{Specialization to the physical observation model}
\label{sec:specialization}

We now specialize to the observation equation of Section~\ref{sec:model},
\[
dY_t^i = \Gamma^i(t)\,X_t\,dt + E^i\,dW_t,
\]
which corresponds to setting $C(t,s)=\Gamma^i(t)\,X_t(s)$ in the
general framework of Proposition~\ref{prop:What_dynamics}.

\paragraph{Innovation process.}
Define the innovation
\[
dI_t^i := dY_t^i - \Gamma^i(t)\,\widehat X_t^i\,dt,
\]
where $\widehat X_t^i:=\E[X_t\mid\calF_t^i]$.
By the innovations theorem for Gaussian linear filtering
(Liptser--Shiryaev~\cite{LiptserShiryaev}, Theorem~7.12),
$I^i$ is a standard $(\calF^i,\Prob)$-Brownian motion.

\paragraph{Innovation form of the dynamics.}
Substituting $C(t,s)=\Gamma^i(t)\,X_t(s)$ into
Proposition~\ref{prop:What_dynamics} and using
$dI_t^i = dY_t^i - \Gamma^i(t)\,\widehat X_t^i\,dt$,
the dynamics~\eqref{eq:What_dynamics_ansatz} become
\begin{equation}
\label{eq:What_innov_form}
d_t\widehat W_t^i(u)
=
\Big(\int_0^t\partial_s\mathsf{C}_t^i(u,s)\,X_t(s)^\top\,ds\Big)
\Gamma^i(t)\,dI_t^i\,
+\; E^{i,\top}\mathbf{1}_{\{u\ge t\}}\,dI_t^i.
\end{equation}

\paragraph{Identifying $\widetilde X^i$ as the filtering gain.}
For $u<t$, the indicator term in~\eqref{eq:What_innov_form} is locally
constant in $u$, so the mixed $(t,u)$-update takes the form
\begin{equation}
\label{eq:duWhat_update_by_innov}
d_t\!\bigl(d_u\widehat W_t^i(u)\bigr)
= \widetilde X^i_t(u)^\top \Gamma^i(t)^\top\,dI_t^i\,du,
\qquad 0 \le u < t \le T.
\end{equation}
The gain in~\eqref{eq:duWhat_update_by_innov} is the conditional
cross-covariance density $\Cov(dW_u,\,\Gamma^i(t)(X_t{-}\widehat X_t^i)\,dt
\mid\calF_t^i)/dt$. Since
$X_t-\widehat X_t^i = \int_0^t \widetilde X^i_t(s)\,dW_s$, It\^o isometry gives
$\widetilde X^i_t(u)^\top\Gamma^i(t)^\top$.

\paragraph{Algebraic identity for $\widetilde X^i$ in terms of $F^i$.}
Recall the induced estimated-state kernel
$\widehat X^i_t(s) := X_t(s)\Pi^i + \int_0^t X_t(z)\,F_t^i(z,s)\,dz$.
Since $\widetilde X^i_t(u) := X_t(u) - \widehat X^i_t(u)$,
\begin{equation}
\label{eq:Xtilde_from_F}
\widetilde X^i_t(u)
=
X_t(u)(I-\Pi^i)
-\int_0^t X_t(z)\,F_t^i(z,u)\,dz.
\end{equation}
Once $F^i$ is determined, $\widetilde X^i$ follows algebraically from~\eqref{eq:Xtilde_from_F}.

\subsection{The filtering kernel \texorpdfstring{$F^i$}{Fi}}
\label{sec:def_F_kernel}

Define $F^i$ by its evolution equation, with $\widetilde X^i$ given by~\eqref{eq:Xtilde_from_F}.

\begin{defn}[Filtering kernel system]
\label{def:filtering_kernel_system}
Fix player $i$.  A deterministic causal kernel $F^i$ on $\Delta_T$
solves the \emph{filtering kernel system} if, with $\widetilde X^i$
defined by~\eqref{eq:Xtilde_from_F}, it satisfies $F_0^i=0$ and
\begin{align}
\partial_t F_t^i(u,s)
&=
\widetilde X^i_t(u)^\top P^i(t)\,\widetilde X^i_t(s),
\qquad u,s < t,
\label{eq:F_evolution}\\[4pt]
F_t^i(u,t)
&=
\widetilde X^i_t(u)^\top\Gamma^i(t)^\top E^i,
\qquad
F_t^i(t,u) = E^{i,\top}\Gamma^i(t)\,\widetilde X^i_t(u).
\label{eq:F_boundary}
\end{align}
\end{defn}

Since~\eqref{eq:Xtilde_from_F} expresses $\widetilde X^i$ as a
deterministic linear functional of $F^i$,
the system~\eqref{eq:F_evolution}--\eqref{eq:F_boundary} is a
closed forward ODE for $F^i$ (quadratic through the substitution).

We now convert the innovations-based characterization of
Subsections~\ref{app:What_dynamics}--\ref{sec:specialization}
into primitive-noise coordinates.
The goal is to construct the deterministic kernel $F^i$ such that
for every $t\in[0,T]$ and $u<t$,
\[
d_u\widehat W_t^i(u)
=
\Pi^i\,dW_u
+
\Bigg(\int_0^t F_t^i(u,s)\,dW_s\Bigg)\,du.
\]

\subsubsection{Derivation}

Recall the innovation form of the estimated-noise dynamics
(equation~\eqref{eq:What_innov_form}):
for $u\le t$,
\[
d_u\widehat W_t^i(u)
=
E^{i,\top}\,dI_u^i
+
\Bigg(\int_u^t \widetilde X^i_s(u)^\top\Gamma^i(s)\,dI_s^i\Bigg)\,du,
\]
where the first term is the direct observation at time~$u$ and the
second is the accumulated drift-based revision over $(u,t]$.
To pass from innovations to primitive noise, we expand each $dI_s^i$
in terms of $dW$.

\paragraph{Expanding the innovation.}
The physical-measure decomposition of the innovation is
\[
dI_t^i
=
E^i\,dW_t
\;+\;
\Gamma^i(t)\,(X_t-\widehat X_t^i)\,dt.
\]
The first piece is the martingale component;
the second is absolutely continuous.
Substituting the linear forms and using
$\widetilde X^i_t(r):=X_t(r)-\widehat X^i_t(r)$
(the definition of~$\widetilde X^i$):
\begin{equation}\label{eq:innov_expansion}
dI_t^i
=
E^i\,dW_t
+
\Gamma^i(t)\int_0^t \widetilde X^i_t(r)\,dW_r\,dt.
\end{equation}

\paragraph{Expanding the direct term.}
Substituting~\eqref{eq:innov_expansion} at $t=u$ into
$E^{i,\top}\,dI_u^i$:
\begin{align}
E^{i,\top}\,dI_u^i
&=
\underbrace{E^{i,\top}E^i\,dW_u}_{=\;\Pi^i\,dW_u}
\;+\;
E^{i,\top}\Gamma^i(u)\int_0^u \widetilde X^i_u(r)\,dW_r\,du.
\label{eq:direct_term_expansion}
\end{align}
The second term is absolutely continuous in $u$ with primitive-noise density
$E^{i,\top}\Gamma^i(u)\,\widetilde X^i_u(s)$ for $s\le u$.

\paragraph{Expanding the integral term.}
Substituting~\eqref{eq:innov_expansion} into
$\int_u^t \widetilde X^i_s(u)^\top\Gamma^i(s)\,dI_s^i$:
\begin{align*}
\int_u^t \widetilde X^i_s(u)^\top\Gamma^i(s)^\top\,dI_s^i
&=
\underbrace{\int_u^t \widetilde X^i_s(u)^\top\Gamma^i(s)^\top\,E^i\,dW_s}_{\text{(a) martingale}}
\\
&\quad+\;
\underbrace{\int_u^t \widetilde X^i_s(u)^\top P^i(s)
\int_0^s \widetilde X^i_s(r)\,dW_r\,ds}_{\text{(b) abs.\ continuous}}.
\end{align*}
Term~(a) contributes $\widetilde X^i_s(u)^\top\Gamma^i(s)^\top E^i$ for
$s\in(u,t]$. Applying stochastic Fubini to term~(b):
\[
\text{(b)}
=
\int_0^t\Bigg(\int_{\max(u,r)}^t
\widetilde X^i_s(u)^\top P^i(s)\,\widetilde X^i_s(r)\,ds\Bigg)\,dW_r.
\]

\paragraph{Assembling the density.}
Collecting the coefficient of $dW_s\,du$ from all three
contributions and noting that causality ($\widetilde X^i_s(u)=0$ for $u>s$)
automatically zeroes the irrelevant indicators:

\begin{equation}\label{eq:F_explicit_app}
F_t^i(u,s)
=
\widetilde X^i_s(u)^\top\Gamma^i(s)^\top E^i
\;+\;
E^{i,\top}\Gamma^i(u)\,\widetilde X^i_u(s)
\;+\;
\int_{\max(u,s)}^t \widetilde X^i_r(u)^\top P^i(r)\,\widetilde X^i_r(s)\,dr.
\end{equation}

The three terms are direct observation, instantaneous drift revelation, and accumulated indirect inference.

\medskip
\noindent Existence of the kernel follows from joint Gaussian projection of $(W,Y^i)$, and uniqueness follows from It\^o isometry: if two deterministic causal kernels give the same density representation, their difference has zero $L^2$ norm on the causal triangle.

\section{Control First Order Conditions}
\label{sec:control_foc}

This appendix derives the stationarity condition~\eqref{eq:FOC_control_revised} and the closed backward system for the adjoint kernels. With opponents' strategy maps and filters fixed, Corollary~\ref{cor:nsl-br} makes the best-response problem an optimization over deterministic impulse-response maps weighting the exogenous innovation $I^i$.%
\footnote{Opponents' deterministic kernels make $I^i$ a standard Brownian motion independent of $D^i$ \cite[Theorem~7.12]{LiptserShiryaev}; Lemma~\ref{lem:convexity_global} gives strict convexity over the full $L^2$ class, so the FOC is necessary and sufficient.}

\subsection{The best-response problem in innovations coordinates}
\label{sec:br_innovations}

Fix a baseline noise-state linear profile and deviating player $i$. By Corollary~\ref{cor:nsl-br}, $\widehat W_t^i(\cdot)$ is a deterministic linear functional of the innovation $I^i_{[0,t]}$, whose law is invariant to $D^i$. Thus $D_t^i=\bar D^i_t+\int_0^t D^i_t(u)\,d_u\widehat W_t^i(u)$ is optimized over deterministic maps $(\bar D^i,D^i)$. Linearity of the state dynamics and $G^{DD,i}(t)\succ0$ make the induced objective strictly convex.

\subsection{Spike variation and the first-variation system}
\label{sec:spike_variation}

Fix $t\in[0,T)$ and $v\in L^2(\Omega,\calF_t^i;\R^d)$.
The spike perturbation
$D_s^{i,\rho,\e}:=D_s^i+\rho\,v\,\1_{[t,t+\e]}(s)$
induces the normalized first variation
$\delta X_s:=\lim_{\e\downarrow 0}\frac{1}{\e}\frac{\partial}{\partial\rho}X_s^{\rho,\e}\big|_{\rho=0}$
with $\delta X_t=v$.

Opponents apply fixed linear maps, so $v\mapsto(\delta X_s,\{\delta w_s^k(\cdot)\}_{k\neq i})$ is linear with deterministic coefficients. The deviator's own innovation is unaffected ($\delta I_s^i=0$), since $\delta X_s\in\calF_t^i$ implies $\delta\widehat X_s^i=\delta X_s$.

\paragraph{Opponents' filter response.}
For each $k\neq i$,
$\delta(d_u\widehat W_s^k(u))=\delta w_s^k(u)\,du$
with $\delta w_t^k(\cdot)\equiv 0$.
Existing coordinates satisfy
\begin{equation}
\label{eq:delta_w_dynamics_revised}
\partial_s\delta w_s^k(u)
=
\widetilde X^k_s(u)^\top P^k(s)
\Bigl(\delta X_s-\int_0^s X_s(r)\,\delta w_s^k(r)\,dr\Bigr),
\qquad 0\le u<s,
\end{equation}
and the coordinate born at $u=s$ satisfies
\begin{equation}
\label{eq:delta_w_birth_revised}
\delta w_s^k(s)
=
E^{k,\top}\Gamma^k(s)
\Bigl(\delta X_s-\int_0^s X_s(r)\,\delta w_s^k(r)\,dr\Bigr).
\end{equation}
The opponent control variation is
$\delta D_s^k=\int_0^s D^k_s(u)\,\delta w_s^k(u)\,du$,
giving the coupled state system
\begin{equation}
\label{eq:delta_X_dynamics_revised}
\frac{d}{ds}\delta X_s
=
A(s)\,\delta X_s
+\sum_{k\neq i}\int_0^s D^k_s(u)\,\delta w_s^k(u)\,du,
\qquad \delta X_t=v.
\end{equation}

\subsection{Transition kernels}
\label{sec:transition_kernels_revised}

The coupled system
\eqref{eq:delta_w_dynamics_revised}--\eqref{eq:delta_X_dynamics_revised},
with boundary condition \eqref{eq:delta_w_birth_revised},
has bounded deterministic coefficients and admits a unique
deterministic two-parameter evolution family.

\begin{defn}[Block transition kernels]
\label{def:block_transition}
For $s\ge t$, define $\Phi^{XX}(s,t)\in\R^{d\times d}$ and
$\Phi^{Xk}(s,t,u)$ ($k\neq i$, $u\in[0,t]$) by
\[
\delta X_s
=
\Phi^{XX}(s,t)\,v
+\sum_{k\neq i}\int_0^t \Phi^{Xk}(s,t,u)\,\eta^k(u)\,du,
\]
for initial data $\delta X_t=v$, $\delta w_t^k(\cdot)=\eta^k(\cdot)$.
In the spike deviation $\eta^k\equiv 0$, so
$\delta X_s=\Phi^{XX}(s,t)\,v$.
The notation $\Phi^{Xk}(s,t,t)$ denotes the right-boundary value for the coordinate born at time $t$.
\end{defn}

These blocks satisfy
\begin{align}
\frac{\partial}{\partial t}\Phi^{XX}(s,t)
&=
-\Phi^{XX}(s,t)A(t)
-\sum_{k\neq i}\Phi^{Xk}(s,t,t)E^{k,\top}\Gamma^k(t)
\notag\\[-1pt]
&\qquad
-\sum_{k\neq i}\int_0^t \Phi^{Xk}(s,t,z)\,\widetilde X^k_t(z)^\top P^k(t)\,dz,
\label{eq:PhiXX_revised}\\[3pt]
\frac{\partial}{\partial t}\Phi^{Xk}(s,t,u)
&=
-\Phi^{XX}(s,t)D^k_t(u)
+\Phi^{Xk}(s,t,t)E^{k,\top}\Gamma^k(t)X_t(u)
\notag\\[-1pt]
&\qquad
+\int_0^t \Phi^{Xk}(s,t,z)\,\widetilde X^k_t(z)^\top P^k(t)\,X_t(u)\,dz,
\label{eq:PhiXk_revised}
\end{align}
with $\Phi^{XX}(t,t)=I$ and $\Phi^{Xk}(t,t,u)=0$.
The terms involving $\Phi^{Xk}(s,t,t)E^{k,\top}\Gamma^k(t)$ come from the diagonal birth condition~\eqref{eq:delta_w_birth_revised}.

\subsection{First variation of costs and the adjoint kernels}
\label{sec:cost_variation_revised}

Fix player $i$ and suppress player superscripts on weight matrices.
Using $\delta X_s=\Phi^{XX}(s,t)v$ and the linear form of $X_s$,
the normalized cost variation is
\begin{equation}
\label{eq:first_variation_revised}
\delta J_t^i(v)
=
2\,\E\!\left[\left.
v^\top\Bigl(
G^{DD,i}(t)D_t^i
+\bar H^X_t
+\int_0^T H^X_t(u)\,d\widehat W_t^i(u)
\Bigr)
\ \right|\calF_t^i\right],
\end{equation}
where
\begin{align}
\bar H^X_t
&:=
\int_t^T\Phi^{XX}(s,t)^\top\bigl[G^{XX}(s)\bar X_s+\bar G^{X,i}_s\bigr]\,ds
+\Phi^{XX}(T,t)^\top\bigl[G^{XX,i}(T)\bar X_T+\bar G^{X,i}_T\bigr],
\label{eq:def_HbarX_revised}\\[3pt]
H^X_t(u)
&:=
\int_t^T\Phi^{XX}(s,t)^\top\bigl[G^{XX}(s)\,X_s(u)+G^{X,i}_s(u)\bigr]\,ds
+\Phi^{XX}(T,t)^\top\bigl[G^{XX,i}(T)\,X_T(u)+G^{X,i}_T(u)\bigr].
\label{eq:def_HX_revised}
\end{align}

\paragraph{Stationarity condition (globally necessary and sufficient).}
Setting $\delta J_t^i(v)=0$ for all $\calF_t^i$-measurable $v$ gives
\begin{equation}
\label{eq:FOC_control_revised}
G^{DD,i}(t)D_t^i
+\bar H^X_t
+\int_0^T H^X_t(u)\,d\widehat W_t^i(u)
=0
\qquad\text{a.s.,}
\end{equation}
with equilibrium coefficients
$\bar D^i_t=-G^{DD,i}(t)^{-1}\bar H^X_t$ and
$D^i_t(u)=-G^{DD,i}(t)^{-1}H^X_t(u)$.

\smallskip
Equivalently, $D_t^i=-(G^{DD,i}(t))^{-1}\E[\bar H^X_t
+\int_0^T H^X_t(u)\,dW_u\mid\calF_t^i]$:
the control is a conditional-expectation projection of the costate.  This is not a separation principle: the costate itself is an equilibrium object shaped by opponents' filters and belief adjoints.

\subsection{Closed backward system for the adjoints}
\label{sec:backward_system_revised}

Define the belief-adjoint coefficients
\begin{align}
\bar H^k_t(u)
&:=
\int_t^T\Phi^{Xk}(s,t,u)^\top\bigl[G^{XX}(s)\bar X_s+\bar G^{X,i}_s\bigr]ds
+\Phi^{Xk}(T,t,u)^\top\bigl[G^{XX,i}(T)\bar X_T+\bar G^{X,i}_T\bigr],
\label{eq:def_Hbark_revised}\\[3pt]
H^k_t(u,r)
&:=
\int_t^T\Phi^{Xk}(s,t,u)^\top\bigl[G^{XX}(s)X_s(r)+G^{X,i}_s(r)\bigr]ds
\notag\\
&\qquad
+\Phi^{Xk}(T,t,u)^\top\bigl[G^{XX,i}(T)X_T(r)+G^{X,i}_T(r)\bigr].
\label{eq:def_Hk_revised}
\end{align}
Equivalently, introduce the noise-linear adjoint processes
\begin{align*}
H^X_t&:=\bar H^X_t+\int_0^T H^X_t(r)\,dW_r,\\
H^k_t(u)&:=\bar H^k_t(u)+\int_0^T H^k_t(u,r)\,dW_r,
\qquad k\neq i.
\end{align*}
For each opponent $k\neq i$, set
\begin{equation}
\label{eq:wedge_process_component_revised}
\mathcal V^{k,i}_t
:=
\Gamma^k(t)^\top E^k H^k_t(t)
+P^k(t)\int_0^t \widetilde X^k_t(z)H^k_t(z)\,dz,
\qquad
\mathcal V^i_t:=\sum_{k\neq i}\mathcal V^{k,i}_t .
\end{equation}
The first term is the contribution of the coordinate born at $u=t$ in
\eqref{eq:delta_w_birth_revised}; the second is the accumulated drift-inference
term from \eqref{eq:delta_w_dynamics_revised}.  In coefficients,
\begin{align}
\bar{\mathcal V}^{k,i}_t
&=
\Gamma^k(t)^\top E^k\bar H^k_t(t)
+P^k(t)\int_0^t \widetilde X^k_t(z)\bar H^k_t(z)\,dz,
\label{eq:wedge_mean_component_revised}\\[2pt]
\mathcal V^{k,i}_t(r)
&=
\Gamma^k(t)^\top E^kH^k_t(t,r)
+P^k(t)\int_0^t \widetilde X^k_t(z)H^k_t(z,r)\,dz .
\label{eq:wedge_kernel_component_revised}
\end{align}

The adjoints satisfy the two backward equations
\begin{align}
\frac{d}{dt}H^X_t
&=
-\bigl[G^{XX,i}(t)X_t+G^{X,i}_t\bigr]
-A(t)^\top H^X_t
-\mathcal V^i_t,
\label{eq:HX_process_backward_revised}\\[3pt]
\frac{d}{dt}H^k_t(u)
&=
-D^k_t(u)^\top H^X_t
+X_t(u)^\top\mathcal V^{k,i}_t,
\qquad k\neq i.
\label{eq:Hk_process_backward_revised}
\end{align}
The terminal conditions are
\begin{equation}
\label{eq:process_terminal_revised}
H^X_T=G^{XX,i}(T)X_T+G^{X,i}_T,
\qquad
H^k_T(\cdot)=0,
\qquad k\neq i .
\end{equation}

\begingroup
\small
\singlespacing
\setlength{\bibsep}{0pt}
\bibliographystyle{plainnat}
\bibliography{sources_pruned}
\endgroup
\end{document}